\newcommand{\stkout}[1]{\ifmmode\text{\sout{\ensuremath{#1}}}\else\sout{#1}\fi}
\numberwithin{equation}{section}
\theoremstyle{plain}                    
\newtheorem{lem}{Lemma}[section]
\theoremstyle{definition}
\theoremstyle{remark}
\newtheorem{ack}{Acknowledgment\!\!}
\def\N{{\mathbb N}}
\def\Z{{\mathbb Z}}
\def\R{{\mathbb R}}
\def\P{{\mathbb P}}
\def\E{{\mathbb E}}
\def\F{{\mathcal F}}
\def\1{{\mathbbm{1}}}
\def\wtl{{\widetilde{\lambda}}}
\def\wty{{\widetilde{Y}}}
\def\wtx{{\widetilde{X}}}
\def\wtz{{\widetilde{Z}}}
\def\ninfty{\mathop{\longrightarrow}\limits_{n\to\infty}}
\def\tinfty{\mathop{\longrightarrow}\limits_{t\to\infty}}
\def\argmin{\mbox{arg}\,\min}
\newcommand{\var}{\mathop{\rm var}\nolimits}
\newcommand{\cov}{\mathop{\rm cov}\nolimits}
\newcommand{\Cov}{\mathop{\rm Cov}\nolimits}
\newcommand{\be}{\begin{equation}}
\newcommand{\bd}{\begin{displaymath}}
\newcommand{\ed}{\end{displaymath}}
\newcommand{\bea}{\begin{eqnarray}}
\newcommand{\eea}{\end{eqnarray}}
\newcommand{\bean}{\begin{eqnarray*}}
\newcommand{\eean}{\end{eqnarray*}}
\begin{document}

\thispagestyle{empty}

\baselineskip13pt

\begin{center}
{\large \sc Mixing properties of nonstationary multivariate count processes}
\end{center}

\vspace*{1cm}

\begin{center}
Zinsou Max Debaly\\
Universit\'e du Qu\'ebec en Abitibi-T\'emiscamingue, GREMA.\\
341 Rue Principale N,\\
Amos, QC J9T 2L8\\
Quebec, Canada\\
Email: debz01@uqat.ca\\[1cm]

Michael H.~Neumann${}^*$\\
Friedrich-Schiller-Universit\"at Jena\\
Institut f\"ur Mathematik\\
Ernst-Abbe-Platz 2\\
D -- 07743 Jena\\
Germany\\
Email: michael.neumann@uni-jena.de\\[1cm]
Lionel Truquet\\
ENSAI\\
Campus de Ker Lann\\
51 Rue Blaise Pascal\\
BP 37203\\
35172 Bruz Cedex\\
France\\
Email: lionel.truquet@ensai.fr\\[1.5cm]
\end{center}

\begin{center}
{\bf Abstract}
\end{center}
We consider multivariate versions of two popular classes of integer-valued processes.
While the transition mechanism is time-homogeneous, a possible nonstationarity is introduced
by an exogeneous covariate process.
We prove absolute regularity ($\beta$-mixing) for the count process with exponentially decaying
mixing coefficients. The proof of this result makes use of some sort of contraction in 
the transition mechanism which allows a coupling of two versions of the count process
such that they eventually coalesce.
We show how this result can be used to prove asymptotic normality of a least squares estimator 
of an involved model parameter.
\vspace*{1cm}

\footnoterule \noindent {\sl 2010 Mathematics Subject
Classification:} Primary 60G10; secondary 60J05. \\
{\sl Keywords and Phrases:} absolute regularity, count processes, integer-valued processes,
mixing, multivariate processes, nonstationary processes. \\
{\sl Short title:} Nonstationary multivariate count processes. \vfill
\noindent
${}^*$Correspondence to: Email: michael.neumann@uni-jena.de

\newpage

\setcounter{page}{1}
\pagestyle{headings}
\normalsize

\section{Introduction}
\label{S1}

Stationary time series models of counts have been extensively studied in the literature. In particular, many 
autoregressive models have been developed. See for instance \citet{Al1} and \citet{Al2} for INAR processes, \citet{Zeg88Markov},
\citet{Davis1}, \citet{Lat2006} or \citet{Fok2009} for Poisson autoregressive models, and the negative binomial INGARCH introduced in \citet{Zhu2011}.
Though most of the models discussed in the literature are univariate, a few contributions also considered multivariate time series of counts;
see in particular \cite{Lat97} for a multivariate extension of INAR processes, \citet{JLSS99} for state-space models or \citet{FSTD20} for a multivariate extension of Poisson autoregressive models. See also \citet{DT19} for some conditions ensuring existence of stationary solutions for 
some of these models. We defer the reader to \citet{Fok24} for a survey of some existing attempts to model multivariate time series of counts.
In a nonstationary framework, see also the recent contribution of \citet{WW18} for a multivariate time series model with common factors.

More recent contributions considered some stationary versions of the models discussed above but defined conditionally on some exogenous covariates.
See for instance \citet{dAg}, \citet{AF1}, \citet{DT21} or \citet{ALL22} in the context of univariate time series models of counts and \citet{DT23}
for generic multivariate time series models. See also \citet{DNT} for very general model specifications with strictly exogenous covariates.

Most of the references mentioned above focus on stationarity and ergodicity properties of the corresponding autoregressive models, since these key stability properties are often sufficient to derive consistency and asymptotic normality of maximum likelihood estimators. However, such a stationary modeling is only relevant when the covariates exhibit itself a stationary behavior. When these covariates are not time-stationary, with a possible explosive behavior, such as a polynomial trend, the model is by essence non-stationary with possible explosive moments. Statistical inference of autoregressive parameters is then more complicated in this setting and mixing properties rather than ergodicity are in order to derive 
consistency and asymptotic normality of estimators.
Recently, \citet{DLN22} derived mixing properties of a Poisson INGARCH model with non-stationary covariates and applied their results to statistical testing for existence of a trend in the intensity of the process. \citet{LN23} also proved such properties for log-linear count processes
where the conditional distributions are taken from a nearly scale-invariant family which allows for even stronger trends. 
We complement their results about mixing properties by considering INGARCH and INAR models, both in the univariate and in the multivariate case. Additionally, we apply our mixing properties to study the asymptotic properties of the least squares estimator in an INAR$(1)$ model in which an additive explosive sequence of Poisson covariates is incorporated in the dynamic. Non-standard fast rates of convergence are obtained,
depending on the growth of the intensity of the Poisson covariate. Our contribution justifies the importance of studying non-stationary models in the context of count data. Indeed, contrarily to the case of continuous data modeled by linear models, it is not possible to detrend the data by differentiation or to decompose the response as a sum of a non-stationary and a stationary component. In this sense, working with a non-stationary count process requires a specific attention and a careful analysis of the asymptotic properties of some classical estimators. Our contribution is one step in this direction.

The paper is organized as follows. In Section \ref{S2}, we present our main results for absolute regularity of two multivariate time series models of counts, a multivariate version of INGARCH model and a multivariate INAR model with nonstationary covariates. We apply our results to statistical inference in a non-stationary INAR model in Section \ref{S3}, followed by a numerical analysis and an application to real data.
Proofs of our results are postponed to the last two sections of the paper.

\section{Main results}
\label{S2}

\subsection{Two models for multivariate count processes}

We consider the following two classes of processes:
\begin{itemize}
\item[1)\quad] {\bf Multivariate Poisson-INGARCH(1,1)}
\end{itemize}
We assume that $(X_t)_{t\in\N_0}$ is a $d$-variate process on a probability space $(\Omega,\F,P)$ obeying the model equations
\begin{subequations}
\begin{eqnarray}
X_t \mid \F_{t-1} & \sim & \mbox{Poi}(\lambda_t), \label{m1.a}\\
\lambda_t & = & A\lambda_{t-1} \,+\, B X_{t-1} \,+\, Z_{t-1} \label{m1.b}
\end{eqnarray}
\end{subequations}
for all $t\in\N$, where $\F_s=\sigma(\lambda_0,X_0,Z_0,\ldots,X_s,Z_s)$ and $(Z_t)_{t\in\N_0}$
is a sequence of independent covariates, $Z_t$ being independent of $\F_{t-1}$ and $X_t$.
For $\lambda_t=(\lambda_{t,1},\ldots,\lambda_{t,d})^T$, $\mbox{Poi}(\lambda_t)$
is a $d$-dimensional Poisson distribution with independent components
and respective intensities $\lambda_{t,1},\ldots,\lambda_{t,d}$.
$A$ and $B$ are $(d\times d)$-matrices with non-negative entries.
\begin{itemize}
\item[2)\quad] {\bf Multivariate GINAR(1)}
\end{itemize}
Multivariate generalized INAR (GINAR) processes were introduced by \citet{Lat97}.
In this case we assume that $(X_t)_{t\in\N_0}$ is a $d$-variate process,
where its components follow the equations
\begin{subequations}
\begin{equation}
\label{m2.a}
X_{t,i} \,=\, \sum_{j=1}^d \sum_{s=1}^{X_{t-1,j}} Y_{t,s}^{i,j} \,+\, Z_{t,i}, \qquad i=1,\ldots,d, \;\; t\in\N,
\end{equation}
where $(Z_t)_{t\in\N}$ is a sequence of independent random vectors, $Z_t=(Z_{t,1},\ldots,Z_{t,d})^T$.
This sequence is assumed to be independent of the collection of count variables
$\big\{ Y_{t,s}^{i,j}\colon\; (t,s,i,j)\in\N^2\times\{1,\ldots,d\}^2 \big\}$
which itself is composed of independent random variables such that
\begin{equation}
\label{m2.b}
Y_{t,s}^{i,j} \,\sim\, \mbox{Bin}(1, B_{ij}),
\end{equation}
where $\mbox{Bin}(n,p)$ denotes a binomial distribution with parameters~$n$ and~$p$; in the special case of $n=1$
this is also known as Bernoulli distribution.
With $B:=(\!(B_{ij})\!)_{i,j=1,\ldots,d}$, it is customary to use the compact notation
\begin{equation}
\label{m2.c}
X_t \,=\, B\circ X_{t-1} \,+\, Z_t, \qquad t\in\N,
\end{equation}
\end{subequations}
where $\circ$ is a $(d\times d)$-dimensional version of the thinning operator.

In the Poisson-INGARCH model, the vector $\lambda_t=(\lambda_{t,1},\ldots,\lambda_{t,d})^T$
describes the current state of the process at time~$t$.
For the GINAR model, this role is taken by the vector of count variables
$X_{t-1}=(X_{t-1,1},\ldots,X_{t-1,d})^T$.
Since we are mainly interested in nonstationary processes, there is no gain
by considering two-sided versions $(X_t)_{t\in\Z}$. Rather, the processes are started
at time~0 with an initial intensity parameter $\lambda_0=(\lambda_{0,1},\ldots,\lambda_{0,d})^T$
or with initial values of the count variables $X_{0,1},\ldots,X_{0,d}$, respectively.
Note that there is a certain connection between these two models.
If in (\ref{m2.b}) $Y_{t,s}^{i,j}\sim\mbox{Poi}(B_{ij})$ and if in (\ref{m2.c}) $Z_t\sim\mbox{Poi}(c_t)$ for a constant~$c_t$,
then
\begin{displaymath}
X_t\mid \F_{t-1} \,\sim\, \mbox{Poi}\big( BX_{t-1} \,+\, c_t \big),
\end{displaymath}
which corresponds to a Poisson-INGARCH model with $A=0_{d\times d}$ (Poisson-INARCH).

We intend to prove absolute regularity for the count process $(X_t)_{t\in\N_0}$, where we allow in particular 
an arbitrarily strong trend. We make use of some sort of contraction in the
transition mechanism which will follow if the entries of the respective matrices~$A$ 
and~$B$ are sufficiently small.
In contrast, the exogenous variables $Z_{t,i}$ may cause a trend. Note that a commonly assumed intercept may also be hidden in these covariates.
Natural choices
for their distributions are 
\begin{displaymath}
Z_{t,i} \,\sim\, \mbox{Bin}(N_{t,i},p) \qquad \mbox{ or } \qquad
Z_{t,i} \,\sim\, \mbox{Poi}(\gamma_{t,i}) \qquad \mbox{ or simply } \qquad Z_{t,i}=N_{t,i}, 
\end{displaymath}
where the constants $N_{t,i}$ and $\gamma_{t,i}$ may increase without bound as $t$ tends to infinity.
In the special case of a univariate Poisson-INGARCH(1,1) process with trend,
mixing properties were derived in \citet[Section~3.1]{DLN22}.

\subsection{A general method for proving absolute regularity}

Let $(\Omega,{\mathcal A},P)$ be a probability space and ${\mathcal A}_1$, ${\mathcal A}_2$
be two sub-$\sigma$-algebras of ${\mathcal A}$. Recall that the coefficient of absolute regularity is defined as
\bd
\beta({\mathcal A}_1,{\mathcal A}_2) \,=\, E\big[ \sup\{ |P(B\mid {\mathcal A}_1) \,-\, P(B)|\colon
\;\; B\in {\mathcal A}_2 \} \big].
\ed
For a process $(X_t)_{t\in\N_0}$ on $(\Omega,\F,P)$, the
coefficients of absolute regularity at the point~$k$ are defined as
\bd
\beta^X(k,n) \,=\, \beta\big( \sigma(X_0,X_1,\ldots,X_k), \sigma(X_{k+n},X_{k+n+1},\ldots) \big)
\ed
and the (global) coefficients of absolute regularity as
\bd
\beta^X(n) \,=\, \sup\{ \beta^X(k,n)\colon \;\; k\in\N_0\}.
\ed

The intended approach of proving absolute regularity is inspired by the fact that one can construct,
on a suitable probability space $(\widetilde{\Omega},\widetilde{\F},\widetilde{P})$ two versions
$(\wtx_t)_{t\in\N_0}$ and $(\wtx_t')_{t\in\N_0}$ of the process $(X_t)_{t\in\N_0}$ such that
$(\wtx_0,\ldots,\wtx_k)$ and $(\wtx_0',\ldots,\wtx_k')$ are independent and
\be
\label{1.1}
\beta^X(k,n) \,=\, \widetilde{P}\left( \wtx_{k+n+r}\neq\wtx_{k+n+r}' \mbox{ for some } r\geq 0 \right).
\end{equation}
A representation of $\beta^X(k,n)$ as in (\ref{1.1}) requires that we find an optimal coupling for the
complete sequences $(\widetilde{X}_{k+1},\widetilde{X}_{k+2},\ldots)$ and $(\widetilde{X}_{k+1}',\widetilde{X}_{k+2}',\ldots)$.
Finding an explicit description of such a coupling seems to be out of reach in our context.
Rather, we employ a maximal coupling of $\widetilde{X}_t$ and $\widetilde{X}_t'$ at each single time point $t\geq k+n$
which still leads to good upper estimates of the coefficients~$\beta^X(k,n)$.
Actually, if $(\wtx_t)_{t\in\N_0}$ and $(\wtx_t')_{t\in\N_0}$ defined on a common probability space
$(\widetilde{\Omega},\widetilde{\F},\widetilde{P})$ are any two versions of $(X_t)_{t\in\N_0}$
such that $(\wtx_0,\ldots,\wtx_k)$ and $(\wtx_0',\ldots,\wtx_k')$ are independent and, of course,
$\widetilde{P}^{\widetilde{X}_{k+r}| \widetilde{X}_{k+r-1}=x_1, \widetilde{X}_{k+r-2}=x_2,\ldots} 
=P^{X_{k+r}| X_{k+r-1}=x_1, X_{k+r-2}=x_2,\ldots}$ and
$\widetilde{P}^{\widetilde{X}_{k+r}'| \widetilde{X}_{k+r-1}'=x_1, \widetilde{X}_{k+r-2}'=x_2,\ldots} 
=P^{X_{k+r}| X_{k+r-1}=x_1, X_{k+r-2}=x_2,\ldots}$ for all $r\geq 1$, then
\bea
\label{1.2}
\lefteqn{ \beta^X(k,n) } \nonumber \\
& \leq & \widetilde{E}\Big[ \sup_{C\in\sigma({\mathcal C})} \big\{ \big|
\widetilde{P}\left( (\wtx_{k+n},\wtx_{k+n+1},\ldots)\in C\mid \wtx_0,\ldots,\wtx_k \right) \nonumber \\
& & \qquad \qquad \qquad
\,-\, \widetilde{P}\left( (\wtx_{k+n}',\wtx_{k+n+1}',\ldots)\in C\mid \wtx_0',\ldots,\wtx_k' \right) \big| \big\} \Big]
\qquad \nonumber \\
& \leq & \widetilde{P}\left( \wtx_{k+n+r} \neq \wtx_{k+n+r}' \quad \mbox{for some } r\in\N_0 \right) \nonumber \\
& = & \widetilde{P}\left( \wtx_{k+n} \neq \wtx_{k+n}' \right) \nonumber \\
& & {} \,+\, \sum_{r=1}^\infty \widetilde{P}\left( \wtx_{k+n+r} \neq \wtx_{k+n+r}', \wtx_{k+n+r-1} = \wtx_{k+n+r-1}',
\ldots, \wtx_{k+n} = \wtx_{k+n}' \right). 
\eea
(In the second line of this display, $\sigma({\mathcal C})$ denotes the $\sigma$-algebra generated by the cylinder sets.)
In the special case when $(X_t)_{t\in\N_0}$ is a Markov chain, we obtain the following simpler estimate for the
mixing coefficient:
\begin{equation}
\label{1.2a}
\beta^X(k,n) \,=\, \beta\big( \sigma(X_k), \sigma(X_{k+n}) \big)
\,\leq\, \widetilde{P}\left( \wtx_{k+n} \neq \wtx_{k+n}' \right).
\end{equation}

\subsection{Absolute regularity of multivariate count processes}

First we set some notation. 
For a vector~$x=(x_1,\ldots,x_d)^T$ and $p=1,2$, we denote by $\|x\|_p=\big(\sum_{j=1}^d |x_i|^p\big)^{1/p}$ $L_p$ norm.
Correspondingly, for a $(d\times d)$-matrix~$M$, $\|M\|_1=\max\big\{\|Mx\|_1\colon\, \|x\|_1=1\big\}=\max_j \big\{ \sum_{i=1}^d |M_{ij}| \big\}$
and 
$\rho(M)=\max\big\{|\lambda|\colon\, \lambda \mbox{ eigenvalue of }M\big\}$ denote its maximum absolute column sum norm and its spectral radius, respectively.
Note that~$\rho(M)$ should not be mixed up with the spectral norm
$\|M\|_2=\max\big\{\|Mx\|_2\colon\, \|x\|_2=1\big\}=\max\big\{|\lambda|\colon\, \lambda \mbox{ singular value of }M\big\}$.
$\rho(M)$ is the infimum of all possible matrix norms~$\|M\|$ and therefore less than or equal to $\|M\|_2$.
Furthermore, for a matrix~$M$ and a vector~$x$ with non-negative components, $\sqrt{M}$ and $\sqrt{x}$ denote
the corresponding objects with respective components~$\sqrt{M_{ij}}$ and~$\sqrt{x_i}$.
$d_{TV}(P,Q)=(1/2)\sum_{k\in\N_0}|P(\{k\})-Q(\{k\})|$ denotes the total variation norm between
two generic distributions~$P$ and~$Q$ on $(\N_0,2^{N_0})$.

\subsubsection{The INGARCH(1,1) process}

To give some insight into details of our approach, we consider first the multivariate Poisson-INGARCH model (\ref{m1.a}),
(\ref{m1.b}).
In this case, the construction leading to a good estimate (\ref{1.2}) may be divided into three phases and
we sketch in what follows how this is accomplished.

{\bf Phase~1:}\\
First we show for independent versions $((\wtx_t,\wtz_t))_{t\in\N_0}$ and $((\wtx_t',\wtz_t'))_{t\in\N_0}$ of the process
$((X_t,Z_t))_{t\in\N_0}$ that
\begin{equation}
\label{ph1}
\sup_k \Big\{ \widetilde{E} \big\| \sqrt{\wtl_k} \,-\, \sqrt{\wtl_k'} \big\|_1 \Big\} \,<\, \infty.
\end{equation}

{\bf Phase~2:}\\
Note that the first term on the right-hand side of (\ref{1.2}) can be made as small as\\
$\widetilde{E} d_{TV} \big( P^{X_{k+n}\mid \lambda_{k+n}=\wtl_{k+n}}, P^{X_{k+n}\mid \lambda_{k+n}=\wtl_{k+n}'} \big)
\,=\, \widetilde{E} d_{TV}\big( \mbox{Poi}(\wtl_{k+n}), \mbox{Poi}(\wtl_{k+n}')\big)$.
Furthermore, since the components of $X_{k+n}$ are conditionally independent we may use the estimate
\begin{displaymath}
d_{TV}\big(\mbox{Poi}(\wtl_{k+n}), \mbox{Poi}(\wtl_{k+n}')\big)
\,\leq\, \sum_{i=1}^d d_{TV}\big(\mbox{Poi}(\wtl_{k+n,i}), \mbox{Poi}(\wtl_{k+n,i}')\big).
\end{displaymath}
A good guideline for our construction is given by the upper estimate
\begin{equation}
\label{1.3}
d_{TV}\big( \mbox{Poi}(\wtl_{k+n,i}), \mbox{Poi}(\wtl_{k+n,i}') \big)
\,\leq\, \sqrt{2/e}\;  \big| \sqrt{\wtl_{k+n,i}} \,-\, \sqrt{\wtl_{k+n,i}'} \big|,
\end{equation}
where~$e$ is Euler's number; see \citet[formula (5)]{Roo03} or Exercise~9.3.5(b) in \citet[page~300]{DvJ88}.
It turns out that this estimate is also suitable in case of a strong trend
and it is therefore used in our proofs. 
In view of (\ref{1.3}), we shall couple the variables $\wtx_{k+1},\wtz_{k+1},\ldots,\wtx_{k+n-1},\wtz_{k+n-1}$
with their corresponding counterparts $\wtx_{k+1}',\wtz_{k+1}',\ldots,\wtx_{k+n-1}',\wtz_{k+n-1}'$
such that $\widetilde{E}\big\| \sqrt{\wtl_{k+n}} \,-\, \sqrt{\wtl_{k+n}'} \big\|_1$ gets small as~$n$ grows.
Since the covariate~$Z_t$ is independent of ${\mathcal F}_{t-1}$ we choose for $t=k+1,\ldots,k+n-1$
$\wtz_t$ and $\wtz_t'$ such that they are equal. 
For the count variables we apply a (step-wise) maximal coupling; see Lemma~\ref{couplinglemma} below.
This implies in particular that the difference between $\wtx_t$ and $\wtx_t'$ also gets small,
\begin{displaymath}
\widetilde{E} \big\| \sqrt{\wtl_{k+n}} \,-\, \sqrt{\wtl_{k+n}'} \big\|_1
\,\leq\, \|(\sqrt{A}+2\sqrt{B})^{n-1}\|_1 \; \widetilde{E} \big\| \sqrt{\wtl_{k+1}} \,-\, \sqrt{\wtl_{k+1}'} \big\|_1
\,=\, O\big( \|(\sqrt{A}+2\sqrt{B})^{n-1}\|_1 \big).
\end{displaymath}

{\bf Phase~3:}\\
In the third phase, the focus is clearly on getting
$\widetilde{P}\big( \wtx_{k+n+r}\neq\wtx_{k+n+1} \quad \mbox{ for some } r\geq 0\big)$
small. 
For given $\wtl_{k+n}$ and $\wtl_{k+n}'$, we apply a maximal coupling to the corresponding components of
$\wtx_{k+n}$ and $\wtx_{k+n}'$, and we obtain
\begin{displaymath}
\widetilde{P}\big( \wtx_{k+n}\neq\wtx_{k+n}' \mid \wtl_{k+n}, \wtl_{k+n}' \big)
\,\leq\, \sum_{i=1}^d \widetilde{P}\big( \wtx_{k+n,i}\neq\wtx_{k+n,i}' \mid \wtl_{k+n}, \wtl_{k+n}' \big),
\end{displaymath}
and so
\begin{displaymath}
\widetilde{P}\big( \wtx_{k+n}\neq\wtx_{k+n}' \big)
\,\leq\, \sum_{i=1}^d \sqrt{2/e} \; \widetilde{E} \big| \sqrt{\wtl_{k+n,i}} - \sqrt{\wtl_{k+n,i}'} \big| \\
\,=\, O\Big( \big\| (\sqrt{A}+2\sqrt{B})^{n-1} \big\|_1 \Big).
\end{displaymath}
For $r>0$, we focus on  
$\widetilde{P}\big( \wtx_{k+n+r,i}\neq\wtx_{k+n+r,i}',\wtx_{k+n}=\wtx_{k+n}',\ldots,\wtx_{k+n+r-1}=\wtx_{k+n+r-1}' \big)$,
i.e.~we only have to consider the case of
$\wtx_{k+n}=\wtx_{k+n}',\ldots,\wtx_{k+n+r-1}=\wtx_{k+n+r-1}'$. We obtain that
\begin{eqnarray*}
\big\| \sqrt{\wtl_{k+n+r}} \,-\, \sqrt{\wtl_{k+n+r}'} \big\|_1
\;\1\big( \wtx_{k+n}=\wtx_{k+n}',\ldots,\wtx_{k+n+r-1}=\wtx_{k+n+r-1}' \big) \\
\leq\, \big\| (\sqrt{A})^r \big\|_1 \; \big\| \sqrt{\wtl_{k+n}} \,-\, \sqrt{\wtl_{k+n}'} \big\|_1.
\end{eqnarray*}
We apply again a maximal coupling to $\wtx_{k+n+r,i}$ and $\wtx_{k+n+r,i}'$, which leads to
\begin{eqnarray*}
\widetilde{P}\big( \wtx_{k+n+r}\neq\wtx_{k+n+r}',
\wtx_{k+n}=\wtx_{k+n}',\ldots,\wtx_{k+n+r-1}=\wtx_{k+n+r-1}' \big) \\
=\, O\Big( \big\| (\sqrt{A}+2\sqrt{B})^{n-1} \big\|_1 \; \big\| (\sqrt{A})^r \big\|_1 \Big).
\end{eqnarray*}
Recall that Gelfand's formula states that $\rho(M)\,=\,\lim_{n\to\infty} \|M^n\|^{1/n}$ for every square
matrix~$M$ and every matrix norm~$\|\cdot\|$.
This implies that
\begin{equation}
\label{l1-sr} 
\| M^n \| \,=\, O\big( \kappa^n \big) \qquad \forall \kappa>\rho(M).
\end{equation}
To summarize, we obtain that
\begin{eqnarray*}
\lefteqn{ \widetilde{P}\big( \wtx_{k+n}\neq\wtx_{k+n}' \big)
\,+\, \sum_{r=1}^\infty \widetilde{P}\big( \wtx_{k+n+r}\neq\wtx_{k+n+r}',
\wtx_{k+n}=\wtx_{k+n}',\ldots,\wtx_{k+n+r-1}=\wtx_{k+n+r-1}' \big) } \\
& = & O\Big( \big\| (\sqrt{A}+2\sqrt{B})^{n-1} \big\|_1 \; \sum_{r=0}^\infty \|(\sqrt{A})^r\|_1 \Big) 
\,=\, O\Big( \big(\rho(\sqrt{A}+2\sqrt{B})+\epsilon\big)^n \Big),
\end{eqnarray*}
for arbitrary $\epsilon>0$.

In view of this discussion, we impose the following condition.
\begin{itemize}
\item[{\bf (A1)}\quad]
\begin{itemize}
\item[(i)\quad] $\rho\left(\sqrt{A}+2\sqrt{B}\right)<1$,
\item[(ii)\quad] $E\sqrt{\lambda_{0,i}}\,<\,\infty$ \quad ($i=1,\ldots,d$),
\item[(iii)\quad] $\sup_{t\in\N} E\big| \sqrt{Z_{t,i}} \,-\, E \sqrt{Z_{t,i}} \big|\,<\,\infty$ \quad ($i=1,\ldots,d$).
\end{itemize}
\end{itemize}
\bigskip

{\rem
Note that (A1)(iii) is fulfilled if the $Z_{t,i}$s have a Poisson or binomial distribution.
Since $x\mapsto \sqrt{x}$ is a concave function we obtain for a non-negative random variable~$Z$ that
$E\sqrt{Z}\leq \sqrt{EZ}$. For $Z\sim\mbox{Poi}(\lambda)$, we obtain that
\begin{eqnarray*}
E\big| \sqrt{Z} \,-\, E\sqrt{Z} \big|
& = & E\big[ \big( E\sqrt{Z} - \sqrt{Z} \big)^+ \big] \,+\, E\big[ \big( E\sqrt{Z} - \sqrt{Z} \big)^- \big] \\
& = & 2\, E\big[ \big( E\sqrt{Z} - \sqrt{Z} \big)^+ \big] \,\leq\, 2\, E\big[ \big( \sqrt{EZ} - \sqrt{Z} \big)^+ \big] \\
& \leq & \frac{ 2\, E\big[ |Z \,-\, EZ| \, \1(\sqrt{Z}\leq \sqrt{EZ}) \big] }{ \sqrt{EZ} } \,\leq\, \frac{ 2\sqrt{\var(Z)} }{ \sqrt{EZ} } \,=\, 2.
\end{eqnarray*}
Likewise, if $Z\sim\mbox{Bin}(n,p)$, then
\begin{displaymath}
E\big| \sqrt{Z} \,-\, E\sqrt{Z} \big|
\,\leq\, \frac{ 2\sqrt{\var(Z)} }{ \sqrt{EZ} } \,=\, 2\,\sqrt{1-p}.
\end{displaymath}
}
\bigskip

Now we are in a position to state our first major result.
\bigskip

{\thm
\label{T1}
Suppose that (\ref{m1.a}), (\ref{m1.b}), and (A1) are fulfilled.
Then the count process $(X_t)_{t\in\N_0}$ is absolutely regular and the coefficients of absolute regularity satisfy
\begin{displaymath}
\beta^X(n) \,=\, O\big( \kappa^n \big),
\end{displaymath}
for any $\kappa>\rho\big(\sqrt{A}+2\sqrt{B}\big)$.
}
\bigskip
{\rem 
The calculations in the proofs of our theorems, and in particular the verification of a contraction property, are based
on 1-norms of vectors and matrices.
Note that, for a matrix~$C$, $\rho(C)$ is the infimum of $\|C\|$ over all possible matrix norms $\|\cdot\|$.
However, since 
$\big\|(\sqrt{A}+2\sqrt{B})^{n-1}\big\|_1=O\big((\rho(\sqrt{A}+2\sqrt{B})+\epsilon)^n\big)$ $\forall \epsilon>0$
it follows that our condition {\bf (A1)}(i) on the spectral radius suffices.
                                                           
As one of the reviewers remarked, this condition may still appear to be quite restrictive.
On the other hand, when $B$ is the zero matrix, then we obtain the mixing property for such models
with explosive covariates as soon as the spectral radius $\rho\big(\sqrt{A}\big)$ is less than~1.
When the dimension~$d$ is~1, this corresponds to $A_{11}<1$ which is a necessary condition even in the stationary case.

Another interesting case is when $d=2$ and the first count time series is autonomous, i.e.~$A_{12}=B_{12}=0$.
Then $\sqrt{A}+2\sqrt{B}$ is a triangular matrix and it follows from
$\mbox{det}\big(\lambda\, I_2-(\sqrt{A}+2\sqrt{B})\big)
=\big(\lambda-(\sqrt{A_{11}}+2\sqrt{B_{11}})\big)\big(\lambda-(\sqrt{A_{22}}+2\sqrt{B_{22}})\big)$
that their eigenvalues are the entries on the main diagonal.
Therefore
\begin{displaymath}
\max_{i=1,2} \Big\{ \sqrt{A_{ii}} \,+\, 2\,\sqrt{B_{ii}} \big\} \,<\, 1
\end{displaymath}
is sufficient and the values of $A_{21}$ and $B_{21}$ can be arbitrarily large.
This example drastically shows why it is advantageous to impose a condition on the spectral radius
rather than on the 1-norm of $\sqrt{A}+2\sqrt{B}$.
}
\bigskip
 
{\rem
(i)\quad
The result of Theorem~\ref{T1} can be easily generalized for some class of Poisson-INGARCH processes
where the components conditioned on the past are not necessarily independent.
Suppose that $(X_t)_{t\in\N_0}$ is a $d$-variate process with components conditioned on the past being independent, and that
(\ref{m1.a}), (\ref{m1.b}), and (A1) are fulfilled.
Suppose further that~$H$ is an invertible $(d\times d)$-matrix with entries~$H_{ij}$ being either~0 or~1, and let
\begin{displaymath}
Y_t \,=\, \big(Y_{t,1},\ldots,Y_{t,d}\big)^T \,=\, H\, X_t \qquad \forall t\in\N_0.
\end{displaymath}
Then
\begin{displaymath}
Y_{t,i}\mid\F_{t-1} \,\sim\, \mbox{Poi}\big( \lambda_{t,i}^Y \big),
\end{displaymath}
where $\lambda_t^Y=(\lambda_{t,1}^Y,\ldots,\lambda_{t,d}^Y)^T=H\lambda_t$,
and 
\begin{displaymath}
\Cov\big( Y_t \big| \F_{t-1} \big) \,=\, H\, \mbox{Diag}\big(\lambda_{t,1},\ldots,\lambda_{t,d}\big) \, H^T.
\end{displaymath}
It follows that the intensity process $\big(\lambda_t^Y\big)_{t\in\N}$ also satisfies an
equation similar to~(\ref{m1.b}),
\begin{displaymath}
\lambda_t^Y \,=\, H\,A\,H^{-1}\,\lambda_{t-1}^Y \,+\, H\,B\,H^{-1}\,Y_{t-1} \,+\, H\,Z_{t-1} \qquad \forall t\in\N.
\end{displaymath}
Since $Y_t$ is a function of $X_t$ we have that $\sigma(Y_0,\ldots,Y_k)\subseteq\sigma(X_0,\ldots,X_k)$
and\\
$\sigma(Y_{k+n},Y_{k+n+1},\ldots)\subseteq\sigma(X_{k+n},X_{k+n+1},\ldots)$. This  implies that
\begin{displaymath}
\beta^Y\big(k,n) \,\leq\, \beta^X\big(k,n\big) \qquad \forall k,n\in\N_0,
\end{displaymath}
and so
\begin{displaymath}
\beta^Y\big(n) \,\leq\, \beta^X\big(n\big) \qquad \forall n\in\N_0,
\end{displaymath}
that is, the process $(Y_t)_{t\in\N_0}$ inherits the property of absolute regularity from
the underlying process $(X_t)_{t\in\N_0}$.
A special case is given by a matrix~$H$ with entries 
$H_{ij}=1$ if $i=j$ or $j=d$, and $H_{ij}=0$ if $i\neq j$ and $j<1$.
It follows from $\mbox{det}(H)=1$ that this matrix is invertible.
We obtain for the first $d-1$ components of the vector~$Y_t$ that
\begin{displaymath}
Y_{t,i} \,=\, X_{t,i} \,+\, X_{t,d}.
\end{displaymath}
This is a popular and simple method to define a multivariate distribution with the property that
the marginal distribution of each variable is Poisson which goes back to \citet{Cam34} and \citet{Tei54}.
One limitation of this approach is that it permits only non-negative dependencies between the components.
This follows from the fact that all components are sums of independent Poisson random variables,
and hence, covariances are governed by the non-negative intensity parameters~$\lambda_{t,i}$.

(ii)\quad
\citet{FSTD20} proposed the following approach to generate multivariate Poisson distributions which also allows 
for negative correlations. For given $\lambda_t=(\lambda_{t,1},\ldots,\lambda_{t,d})'$,
samples $U_{t,l}=(U_{t,l,1},\ldots,U_{t,l,d})'$ ($l=1,2,\ldots$) from a $d$-dimensional copula~$C$ were
transformed to marginally exponentially distributed variates $X_{t,l,i}=-\ln(U_{t,l,i})/\lambda_{t,i}$.
Then marginally Poisson distributed variates with intensity~$\lambda_{t,i}$ are obtained by 
$Y_{t,i}=\max\{k\colon \sum_{l=1}^k X_{t,l,i}\leq 1\}$.
However, we were not able to prove absolute regularity of explosive variants of such processes
since we could not find an efficient estimate of the total variation between two such
distributions in terms of the square roots of the intensities. Moreover, our sufficient condition for
the mixing properties of this model
remains stronger than the optimal stationarity condition $\rho(A+B)<1$ given in \citet{DT19}.
These restrictions are mainly due to our proof and assumptions allowing explosive covariates in the model.
When the covariates are nonstationary but with a non-explosive mean, one can derive mixing properties
of the model without conditional independence or parameter restrictions. We give a result in what follows.
}
\bigskip

In the following result, we still consider model (\ref{m1.b}) but now, for given $\lambda_t=(\lambda_{t,1},\ldots,\lambda_{t,d})'$,
we construct dependent Poisson variates according to the general construction of a multivariate count distribution with Poisson marginals
and an underlying copula~$C$ as presented in \citet{FSTD20}.
We start with samples $U_{t,l}=(U_{t,l,1},\ldots,U_{t,l,d})'$ ($l=1,2,\ldots$) from a $d$-dimensional copula~$C$
and transform these random variables to $X_{t,l,i}=-\ln(U_{t,l,i})\sim\mbox{Exp}(1)$.
In contrast to \citet{FSTD20} who directly generated Poisson distributed variates, we first generate Poisson processes
$N_u^{(t,i)}=\max\{k\in\N\colon\, \sum_{l=1}^k X_{t,i,l}\leq u\}$.
Then $(N_u^{(t,i)})_{u\geq 0}$ is a Poisson process with intensity~1 and, for $\lambda_t=(\lambda_{t,1},\ldots,\lambda_{t,d})'$,
$(N_{\lambda_{t,1}}^{(t,1)},\ldots,N_{\lambda_{t,d}}^{(t,d)})'$
has conditionally a Poisson distribution with dependent components. We denote the distribution of this vector by $\mbox{Poi}_{dep}(\lambda_t)$.
\bigskip

{\thm
\label{T2+}
Suppose that $\rho(A+B)<1$, $\E\lambda_{0,i}<\infty$ and $\sup_{t\in\N}\E\vert Z_{t,i}\vert<\infty$ for $i=1,\ldots,d$.
Then the count process $(X_t)_{t\in\N_0}$ is absolutely regular and the coefficients of absolute regularity satisfy
\begin{displaymath}
\beta^X(n) \,=\, O\big( \kappa^n \big),
\end{displaymath}
for any $\kappa>\rho(A+B)$.
}
\bigskip

{\rem One can show directly that the condition $\rho(A+B)<1$ is weaker than {\bf (A1)}(i).
Let~$C$ and~$D$ be two square matrices of size $d\times d$ and with non-negative coefficients.
Since $\|C^n\|_1\leq \|(\sqrt{C})^n\|_1^2$ we obtain from Gelfand's formula
\begin{displaymath}
\sqrt{\rho(C)} \,=\, \lim_{n\rightarrow \infty}\Vert C^n\Vert_1^{1/2n}
\,\leq\, \lim_{n\rightarrow \infty}\Vert (\sqrt{C})^n\Vert_1^{1/n} \,=\, \rho\left(\sqrt{C}\right).
\end{displaymath}
Moreover, if $C_{ij}\leq D_{ij}$ for $1\leq i,j\leq d$, then $\rho(C)\leq\rho(D)$.
For the matrices $A$ and $B$ defining our model, we then deduce that
\begin{displaymath} 
\sqrt{\rho(A+B)} \,\leq\, \rho\left(\sqrt{A+B}\right) \,\leq\, \rho\left(\sqrt{A}+2\sqrt{B}\right).
\end{displaymath}
When the covariates are integrable and non-explosive, Theorem~\ref{T2+} provides absolute regularity of the count process
under much less restrictive assumptions. 
}
\bigskip

\subsubsection{The GINAR(1) process}

Now we turn to the GINAR model (\ref{m2.a}), (\ref{m2.b}). In this case, the count process $(X_t)_{t\in\N_0}$
is Markovian, which means that the coefficients of absolute regularity can be estimated according to
(\ref{1.2a}).
Here we impose the following condition.
\begin{itemize}
\item[{\bf (A2)}\quad]
\begin{itemize}
\item[(i)\quad] $\rho\left(\sqrt{B}\right)\,<\,1/2$,
\item[(ii)\quad] $E\sqrt{X_{0,i}}\,<\,\infty$ \quad ($i=1,\ldots,d$),
\item[(iii)\quad] $\sup_{t\in\N} E\big| \sqrt{Z_{t,i}} \,-\, E \sqrt{Z_{t,i}} \big|\,<\,\infty$ \quad ($i=1,\ldots,d$).
\end{itemize}
\end{itemize}
Now we can state our second main result.

{\thm
\label{T2}
Suppose that (\ref{m2.a}), (\ref{m2.b}), and (A2) are fulfilled.
Then the count process $(X_t)_{t\in\N_0}$ is absolutely regular
and the coefficients of absolute regularity satisfy
\begin{displaymath}
\beta^X(n) \,=\, O\big( \kappa^n \big),
\end{displaymath}
for any $\kappa>2\rho(\sqrt{B})$.
}
\bigskip

As for the Poisson autoregressive model, we also give a result for non-explosive covariates
with a less restrictive condition on the parameter space.
\bigskip

{\thm
\label{T2++}
Suppose that $\rho(B)<1$ and for $1\leq i\leq d$,
$\E\vert X_{0,i}\vert<\infty$ and $\sup_{t\in\N}\E\vert Z_{t,i}\vert<\infty$.
Then for any $\kappa>\rho(B)$,
\begin{displaymath}
\beta^X(n) \,=\, O\big( \kappa^n \big).
\end{displaymath}
}
\bigskip

\noindent
Proofs of Theorems~\ref{T1} to~\ref{T2++} are given in Subsections~\ref{SS4.1} to~\ref{SS4.4}, respectively.

\section{An application in statistics}
\label{S3}

In this section we apply our results to prove
asymptotic normality of a least squares estimator of the parameter in non-stationary 
Poisson-INARCH(1) and GINAR(1) models.
To simplify matters and in order to avoid any kind of high-level assumptions we focus
on the special cases where in (\ref{m1.b}) $A=0_{d\times d}$ and in (\ref{m1.b}) and (\ref{m2.c})
$B=\mbox{Diag}(b_1,\ldots,b_d)$.
This means that the parameter~$b_i$ can be estimated on the basis of the $i$th components
of the processes $(X_t)_{t\in\N_0}$ and $(Z_t)_{t\in\N_0}$ alone.
The simplification allows us to switch to the univariate case.

Suppose that the random variables $X_0,Z_1,X_1,Z_2,X_2,\ldots$ follow the model equations
\begin{equation}
\label{3.11}
X_t \,=\, b\circ X_{t-1} \,+\, Z_t,\qquad t\in\N,
\end{equation}
where $b\circ X_{t-1}$ can be represented as $\sum_{s=1}^{X_{t-1}} Y_{t,s}$.
$\{Y_{t,s}\colon\, t,s\in\N\}$ is a double array of independent and identically distributed
random variables such that $\{Y_{t,s}\colon\, s\in\N\}$ is also independent of $\F_{t-1}$.
We suppose that $Z_t$ follows a Poisson distribution with a non-random intensity~$\gamma_t$.
We have two special cases in mind:
If $Y_{t,s}\sim\mbox{Poi}(b)$, then $b\circ X_{t-1}\mid\F_{t-1}\sim \mbox{Poi}(bX_{t-1})$
and we obtain a Poisson-INARCH(1) model.
If $Y_{t,s}\sim\mbox{Bin}(1,b)$, then $b\circ X_{t-1}\mid\F_{t-1}\sim \mbox{Bin}(X_{t-1},b)$
and we obtain a Binomial-INARCH(1) or GINAR(1) model.
We shall assume that $0<b<1/4$ and that $X_0$ is non-random.
The trend of the count process $(X_t)_{t\in\N_0}$ is determined by that of $(Z_t)_{t\in\N_0}$, see Remark~\ref{R2} below.
We will then refer to this model as stochastic trend count autoregressive model, hereafter st-CAR.

\subsection{Asymptotic normality of a least squares estimator of an st-CAR model}

Before we proceed we impose some regularity conditions on the intensity process $(\gamma_t)_{t\in\N_0}$.
\bigskip
\begin{itemize}
\item[{\bf (A3)}\quad]
\begin{itemize}
\item[(i)\quad] The sequence $(\gamma_t)_{t\in\N_0}$ is monotonically non-decreasing and $\gamma_t\tinfty\infty$.
\item[(ii)\quad] $\gamma_{t+1}/\gamma_t\tinfty 1$ and there exists some $C<\infty$ such that
$\gamma_{2t}/\gamma_t\leq C \quad \forall t\in\N$.
\end{itemize}
\end{itemize}
\bigskip
Let $r_n=\sum_{t=1}^n \gamma_t^2$ and $s_n=\sum_{t=1}^n \gamma_t^3$.

{\rem
\label{R2}
We show below that the sequence $(\gamma_t)_{t\in\N_0}$ determines the growth rate of the count process
and that the sequences $(r_n)_{n\in\N}$ and $(s_n)_{n\in\N}$ together govern the rate of convergence
of a least squares estimator of the parameter~$b$.
Here are two examples: 
\begin{itemize}
\item[(i)\quad] (polynomial growth)\\
If $\gamma_t=d\,t^\alpha$, $d>0$ and $\alpha>0$, then we obtain from
$\int_0^n u^\beta\,du=\frac{n^{\beta+1}}{\beta+1}\leq \sum_{t=1}^n t^\beta
\leq \int_0^{n+1}u^\beta\,du=\frac{(n+1)^{\beta+1}}{\beta+1}$ ($\beta=2\alpha,3\alpha$) that
\begin{displaymath}
r_n \,=\, d^2\, \frac{n^{2\alpha+1}}{2\alpha+1} \, \big( 1 \,+\, o(1) \big) \qquad \mbox{ and } \qquad
s_n \,=\, d^3\, \frac{n^{3\alpha+1}}{3\alpha+1} \, \big( 1 \,+\, o(1) \big).
\end{displaymath}
\item[(ii)\quad] (logarithmic growth)\\
If $\gamma_t=d\,(\ln(t))^\alpha$, $d>0$ and $\alpha>0$, then the rates of growth of the sequences $(r_n)_{n\in\N}$ and
$(s_n)_{n\in\N}$ deviate from~$n$ only by logarithmic factors.
We have for $\beta=2\alpha,3\alpha$, 
\begin{displaymath}
\int_{1/n}^1 \Big( \frac{\ln(nx)}{\ln(n)} \Big)^\beta \, dx
\,\leq\, \frac{1}{n} \sum_{t=2}^n \Big( \frac{\ln(t)}{\ln(n)} \Big)^\beta
\,\leq\, \int_{1/n}^1 \Big( \frac{\ln(nx)}{\ln(n)} \Big)^\beta \, dx \,+\, \frac{1}{n}.
\end{displaymath}
Furthermore, it follows from dominated convergence that $\int_{1/n}^1 \big( \frac{\ln(nx)}{\ln(n)} \big)^\beta \, dx\ninfty 1$,
which implies that
\begin{displaymath}
\sum_{t=1}^n \big( \ln(t) \big)^\beta
\,=\, n\, \big( \ln(n) \big)^\beta \, \frac{1}{n} \sum_{t=2}^n \Big( \frac{\ln(t)}{\ln(n)} \Big)^\beta
\,=\, n\, \big( \ln(n) \big)^\beta \, \big( 1 \,+\, o(1) \big).
\end{displaymath}
Hence,
\begin{displaymath}
r_n \,=\, d^2\, n\, \big( \ln(n) \big)^{2\alpha} \, \big( 1 \,+\, o(1) \big) \qquad \mbox{ and } \qquad
s_n \,=\, d^3\, n\, \big( \ln(n) \big)^{3\alpha} \, \big( 1 \,+\, o(1) \big).
\end{displaymath}
\end{itemize}
}
\bigskip

Now we can draw a conclusion about the growth of the count variables~$X_t$.
Let $\varepsilon_t=b\circ X_{t-1}-b\,X_{t-1}$.
A repeated application of our model equation (\ref{3.11}) leads to
\begin{eqnarray}
\label{3.12a}
X_t & = & b\, X_{t-1} \,+\, \varepsilon_t \,+\, Z_t \nonumber \\
& = & b\, \big\{ b\, X_{t-2} \,+\, \varepsilon_{t-1} \,+\, Z_{t-1} \big\} \quad + \quad \varepsilon_t \,+\, Z_t \nonumber \\
& = & \ldots \,=\, b^t\, X_0 \,+\, \sum_{s=0}^{t-1} b^s\, \big( \varepsilon_{t-s} \,+\, Z_{t-s} \big) \nonumber \\
& = & b^t\, X_0 \,+\, \sum_{s=0}^{t-1} b^s\, \gamma_{t-s} \,+\, \sum_{s=0}^{t-1} b^s\, \big( \varepsilon_{t-s} \,+\, Z_{t-s} \,-\, \gamma_{t-s} \big).
\end{eqnarray}
Therefore,
\begin{eqnarray}
\label{3.12}
EX_t & = & b^t\, X_0 \,+\, \sum_{s=0}^{t-1} b^s\, \gamma_{t-s} \nonumber \\
& = & b^t\, X_0 \,+\, \gamma_t \Big\{ \frac{1-b^t}{1-b} \,-\, \sum_{s=0}^{t-1} b^s \big(1\,-\,\gamma_{t-s}/\gamma_t\big)
\Big\} \nonumber \\
& = & \frac{\gamma_t}{1-b} \, \big( 1 \,+\, o(1) \big),
\end{eqnarray}
that is, the growth of the $X_t$ is determined by the sequence $(\gamma_t)_{t\in\N_0}$.

Suppose now that $X_0,\ldots,X_n$ and $Z_1,\ldots,Z_n$ are observed.
Since
\begin{displaymath}
X_t \,-\, Z_t \,=\, b\, X_{t-1} \,+\, \varepsilon_t, \qquad t=1,\ldots,n,
\end{displaymath}
we can easily estimate the parameter~$b$.
The ordinary least squares estimator is given as
$\widehat{b}_n \,=\, \argmin_b \sum_{t=1}^n (X_t-Z_t \,-\, bX_{t-1})^2
\,=\, \sum_{t=1}^n X_{t-1} (X_t-Z_t)/\sum_{t=1}^n X_{t-1}^2$
and it follows that
\begin{displaymath}
\frac{r_n}{\sqrt{s_n}}\big( \widehat{b}_n \,-\, b \big)
\,=\, \frac{r_n}{\sum_{t=1}^n X_{t-1}^2} \, \sum_{t=1}^n X_{t-1} \varepsilon_t/\sqrt{s_n}.
\end{displaymath}
We analyze the terms $\sum_{t=1}^n X_{t-1}^2$ and $\sum_{t=1}^n X_{t-1} \varepsilon_t/\sqrt{s_n}$
separately.

We shall show in the course of the proof of Proposition~\ref{P3.1} that
\begin{subequations}
\begin{equation}
\label{3.13a}
\sum_{t=1}^n E\big[X_{t-1}^2\big] \,=\, \frac{r_n}{(1-b)^2}\, \big(1+o(1)\big)
\end{equation}
and
\begin{equation}
\label{3.13b}
\sum_{t=1}^n E\big[X_{t-1}^3\big] \,=\, \frac{s_n}{(1-b)^3}\, \big(1+o(1)\big).
\end{equation}
\end{subequations}
Using the mixing property stated in Theorems~\ref{T1} and~\ref{T2} together with an appropriate bound
for the moments of the~$X_t$ we shall also show that
\begin{subequations}
\begin{equation}
\label{3.14a}
\sum_{t=1}^n X_{t-1}^2 \,=\, \frac{r_n}{(1-b)^2} \,+\, o_P\big( r_n \big).
\end{equation}
and
\begin{equation}
\label{3.14b}
\sum_{t=1}^n X_{t-1}^3 \,=\, \frac{s_n}{(1-b)^3} \,+\, o_P\big( s_n \big).
\end{equation}
\end{subequations}
Let $Y_{n,t}=X_{t-1}\varepsilon_t/\sqrt{s_n}$. Then
\begin{equation}
\label{3.15a}
E\big( Y_{n,t} \mid \F_{t-1} \big) \,=\, 0
\end{equation}
and, using $\sum_{t=1}^n X_{t-1}^3/s_n \stackrel{P}{\longrightarrow} 1/(1-b)^3$,
\begin{equation}
\label{3.15b}
\sum_{t=1}^n E\big( Y_{n,t}^2 \mid \F_{t-1} \big)
\,=\, \sum_{t=1}^n X_{t-1}^2 \, E\big( \varepsilon_t^2 \mid \F_{t-1} \big) / s_n 
\,=\, \nu \sum_{t=1}^n X_{t-1}^3/s_n \,\stackrel{P}{\longrightarrow}\, \frac{\nu}{(1-b)^3},
\end{equation}
where $\nu=b$ in case of the Poisson-INARCH model and $\nu=b(1-b)$ in case of the Binomial-INARCH (i.e.~GINAR) model.
Finally, we shall show that
\begin{equation}
\label{3.15c}
\sum_{t=1}^n E\big[ Y_{n,t}^{8/3} \big] \,\ninfty\, 0,
\end{equation}
which implies, for $\epsilon>0$, that
\begin{displaymath}
\sum_{t=1}^n E\big[ Y_{n,t}^2 \, \1(|Z_{n,t}|>\epsilon ) \big]
\,\leq\, \sum_{t=1}^n E\big[ Y_{n,t}^{8/3} \big]/\epsilon^{2/3} \,\ninfty\, 0,
\end{displaymath}
i.e., the Lindeberg condition is satisfied.
Using (\ref{3.15a}) to (\ref{3.15c}) we obtain by a central limit theorem for sums of martingale differences
(see e.g. Corollary~3.8 in \citet{McL74} or Theorem~2 in conjunction with Lemma~2 in \citet{Bro71}) that
\begin{equation}
\label{3.16}
\frac{1}{\sqrt{s_n}} \, \sum_{t=1}^n X_{t-1} \varepsilon_t \,=\, \sum_{t=1}^n Y_{n,t}
\,\stackrel{d}{\longrightarrow}\, {\mathcal N}\big(0, \frac{\nu}{(1-b)^3}\big).
\end{equation}
(\ref{3.14a}) and (\ref{3.16}) lead to the following result.
\bigskip

{\prop
\label{P3.1}
Suppose that (\ref{3.11}) and the assumptions below this equation are satisfied.
Furthermore, suppose that (A3) is fulfilled. Then
\begin{displaymath}
\frac{r_n}{\sqrt{s_n}} \big( \widehat{b}_n \,-\,  b\big)
\,\stackrel{d}{\longrightarrow}\, {\mathcal N}\big( 0, \nu(1-b) \big).
\end{displaymath}
}
\bigskip

We see that the rate of convergence of~$\widehat{b}_n$ is $\sqrt{s_n}/r_n$.
In the special cases mentioned in the remark above, it is $n^{-(\alpha+1)/2}$ if $\gamma_t=dt^\alpha$
and $(n\,(\ln(n))^\alpha)^{-1/2}$ if $\gamma_t=d(\ln(t))^\alpha$.
In order to establish an asymptotic confidence interval for~$b$, it remains to estimate
the norming constants~$r_n$ and~$s_n$. (\ref{3.14a}), (\ref{3.14b}), and Proposition~\ref{P3.1} imply that
\begin{displaymath}
\big(1 \,-\, \widehat{b}_n\big)^2 \, \sum_{t=1}^n X_{t-1}^2 
\,=\, r_n \,+\, o_P(r_n)
\qquad {\rm and} \qquad
\big(1 \,-\, \widehat{b}_n\big)^3 \, \sum_{t=1}^n X_{t-1}^3 
\,=\, s_n \,+\, o_P(s_n).
\end{displaymath}
Therefore, an asymptotic confidence interval with a nominal coverage probability of $(1-\beta)$
is given by 
\begin{displaymath}
\Big[ \widehat{b}_n \,-\, \Phi^{-1}(1-\beta/2) \, K_n, \widehat{b}_n \,+\, \Phi^{-1}(1-\beta/2) \, K_n \Big],
\end{displaymath}
where $K_n=\sqrt{\widehat{b}_n} \sqrt{\sum_{t=1}^n X_{t-1}^3}/\sum_{t=1}^n X_{t-1}^2$ in case
of the Poisson-INARCH model and
$K_n=\sqrt{\widehat{b}_n(1-\widehat{b}_n)} \sqrt{\sum_{t=1}^n X_{t-1}^3}/\sum_{t=1}^n X_{t-1}^2$
for the Binomial-INARCH model.

\subsection{Numerical studies}
\label{subsuc::simulations}
We present a small numerical study to illustrate the asymp\-totic properties of the least squares estimator for an st-CAR model
provided by Proposition~\ref{P3.1}. To do so, we consider three values of parameter $b$, $0.1, 0.16$ and $0.23$.
For each value, we consider three different growth rates $\gamma_t=t$, $\gamma_t=t^2$ and $\gamma_t=\ln(t)$,
and simulate trajectories of length $n=50$, $n=100$ and $n=1,000$ for Poisson-INARCH (abbreviated as PINARCH)
and Binomial INARCH (abbreviated as BINARCH) models. We repeat this process $B=1,000$ times and compute
the least squares estimator of~$b$ and its average value (line LSE in Table~\ref{tb::simulations}).
Moreover, we also compute
the average value of~$K_n$ for Poisson-INARCH (line $K_n$-PINARCH in Table~\ref{tb::simulations}) and for
Binomial-INARCH (line $K_n$-BINARCH in Table~\ref{tb::simulations}) models. The simulation results look very promising,
even for moderate sample sizes $n=50$. As expected, the value of~$K_n$ is smaller for $\gamma_t = t^2$
than that for $\gamma_t = t$ which is itself smaller than that for  $\gamma_t = \ln(t)$, regardless of
the conditional distribution of the count process. 
Note that we also computed $K_n$ for both models in an incorrectly specified situation, i.e.~when one model is assumed to be true
but data were generated according to the other model. One can observe that assuming a Bernoulli or a Poisson distribution
in the thinning operator leads approximately to the same length for the confidence intervals, on average.
Our procedure seems then to be robust with respect to this particular model assumption.

\begin{table}[!ht]
	\footnotesize
	\begin{tabular}{lllccccccccc}
\cline{4-9}
&	&	& \multicolumn{3}{c}{PINARCH}			& \multicolumn{3}{c}{BINARCH}		\\
\cline{4-9}
&	& $\gamma_t$	& $t$ 	& $t^2$	& $ln~ t$	& $t$	& $t^2$	& $ln~ t$ \\ \hline 
\rowcolor[gray]{.9}n = 50	& $b= $ 0.1	& LSE	        & 0.0996	& 0.1000	& 0.0975	& 0.0998	& 0.0999	& 0.0989 \\

& 	& $K_n$-PINARCH	& 0.0090	& 0.0016	& 0.0262	& 0.0090	& 0.0016	& 0.0263 \\

& 	& $K_n$-BINARCH	& 0.0085	& 0.0015	& 0.0249	& 0.0085	& 0.0015	& 0.0250 \\

\rowcolor[gray]{.9}&  $b= $ 0.16	& LSE	        & 0.1597	& 0.1600	& 0.1567	& 0.1597	& 0.1600	& 0.1571 \\

& 	& $K_n$-PINARCH	& 0.0110	& 0.0019	& 0.0320	& 0.0110	& 0.0019	& 0.0321 \\

& 	& $K_n$-BINARCH	& 0.0101	& 0.0018	& 0.0293	& 0.0101	& 0.0018	& 0.0294 \\

\rowcolor[gray]{.9}&  $b= $ 0.23	& LSE	        & 0.2299	& 0.2300	& 0.2253	& 0.2295	& 0.2301	& 0.2289 \\

& 	& $K_n$-PINARCH	& 0.0126	& 0.0022	& 0.0365	& 0.0126	& 0.0022	& 0.0368 \\

& 	& $K_n$-BINARCH	& 0.0111	& 0.0020	& 0.0321	& 0.0111	& 0.0020	& 0.0322 \\
 \cline{2-9}
\rowcolor[gray]{.9}n = 100	&$b= $  0.1	& LSE	        & 0.100	        & 0.1000	& 0.0999	& 0.1000	& 0.1000	& 0.0994 \\

& 	& $K_n$-PINARCH	& 0.0045	& 0.0006	& 0.0170	& 0.0045	& 0.0006	& 0.0169 \\

& 	& $K_n$-BINARCH	& 0.0043	& 0.0005	& 0.0161	& 0.0043	& 0.0005	& 0.0160 \\

\rowcolor[gray]{.9}& $b= $ 0.16	& LSE	        & 0.1599	& 0.1600	& 0.1585	& 0.1600	& 0.1600	& 0.1592 \\

& 	& $K_n$-PINARCH	& 0.0055	& 0.0007	& 0.0206	& 0.0055	& 0.0007	& 0.0206 \\

& 	& $K_n$-BINARCH	& 0.0050	& 0.0006	& 0.0189	& 0.0050	& 0.0006	& 0.0189 \\

 \rowcolor[gray]{.9}&  $b= $ 0.23 	& LSE	        & 0.2298	& 0.2300	& 0.2276	& 0.2300	& 0.2300	& 0.2292 \\

& 	& $K_n$-PINARCH	& 0.0063	& 0.0008	& 0.0235	& 0.0063	& 0.0008	& 0.0235 \\

& 	& $K_n$-BINARCH	& 0.0055	& 0.0007	& 0.0207	& 0.0055	& 0.0007	& 0.0206 \\
  \cline{2-9}
\rowcolor[gray]{.9}n = 1000& $b= $ 0.1	& 	LSE	& 0.1000	& 0.1000	& 0.1002	& 0.1000	& 0.1000	& 0.0999 \\

& 	& $K_n$-PINARCH	& 0.0005	& 0.0000	& 0.0041	& 0.0005	& 0.0000	& 0.0041 \\

& 	& $K_n$-BINARCH	& 0.0004	& 0.0000	& 0.0039	& 0.0004	& 0.0000	& 0.0039 \\

\rowcolor[gray]{.9}&  $b= $ 0.16	& LSE	        & 0.1600	& 0.1600	& 0.1598	& 0.1600	& 0.1600	& 0.1600 \\

& 	& $K_n$-PINARCH	& 0.0006	& 0.0000	& 0.0050	& 0.0006	& 0.0000	& 0.0050 \\

& 	& $K_n$-BINARCH	& 0.0005	& 0.0000	& 0.0046	& 0.0005	& 0.0000	& 0.0046 \\

\rowcolor[gray]{.9}&  $b= $ 0.23	& LSE	        & 0.2300	& 0.2300	& 0.2298	& 0.2300	& 0.2300	& 0.2300 \\

& 	& $K_n$-PINARCH	& 0.0006	& 0.0000	& 0.0058	& 0.0006	& 0.0000	& 0.0058 \\

& 	& $K_n$-BINARCH	& 0.0006	& 0.0000	& 0.0051	& 0.0006	& 0.0000	& 0.0051 \\ \hline
	\end{tabular}
\caption{Simulation results of least squares estimator for st-CAR model}\label{tb::simulations}
\end{table}

\subsection{Real data example}
\label{sec::real_data_analysis}
As an illustration, we analyse the count number of questions about  scrapy, a python framework for web scraping.
We consider the number of questions about natural language processing (NLP) as a predictor. Actually, NLP entered
a new era as of  2010 with recent development on neural network models.  BERT (Bidirectional Encoder Representations
from Transformers) is the state-of-art model for NLP developed by Google in 2018. There are more and more NLP projects
based on internet content, like sentiment analysis,  due to the high use of social media,  among others.
For data harvesting and processing purposes, many libraries such as  Requests or  BeautifulSoup came up in python.
Beside these ones, scrapy represents a whole framework, meaning it comes with a set of rules and conventions, that
allow us to  efficiently  solve common web scraping  problems. 

For this small data analysis study, we download  data  of monthly number of various questions about NLP and scrapy on stackoverflow, 
the largest online community for programmers to learn and  share their knowledge.
The data are available on \url{https://www.kaggle.com/datasets/aishu200023/stackindex}.
They were collected between January 2009 and December 2019 (see Fig.~\ref{fig::questions}).
Estimated value of parameter $b$ is $0.23269$ with $K_n-$INGARCH $= 0.00453$ and $K_n-$BINARCH $= 0.00397.$

Let us mention a possible application of our results to the derivation of an upper bound for the conditional MSE of prediction.
For instance, in the Poisson case, suppose that we want to bound
\begin{displaymath}
S_n \,:=\, E\big[(X_{n+1}-\widehat{X}_{n+1})^2\big| \mathcal{F}_n\big] \,=\, X_n b \,+\, \left(\widehat{b}_n-b\right)^2X_n^2,
\end{displaymath}
where $\widehat{X}_{n+1}=\widehat{b}_n X_n+Z_{n+1}$. For a given level $\beta$, let us define 
$q_\beta=\Phi\left(1-\beta/2\right)K_n$. We then get
\begin{displaymath}
P\left(S_n\leq X_n\widehat{b}_n+q_\beta X_n+q_\beta^2 X_n^2\right)
\,\geq\, P\left(\left\vert \widehat{b}_n-b\right\vert\leq q_\beta\right)
\end{displaymath}
and we deduce that 
\begin{displaymath}
\liminf_{n\rightarrow \infty} P\left(S_n\leq X_n\widehat{b}_n+q_\beta X_n+q_\beta^2 X_n^2\right) \,\geq\, 1-\beta.
\end{displaymath}

\begin{figure}[!h]
\includegraphics[scale=0.55]{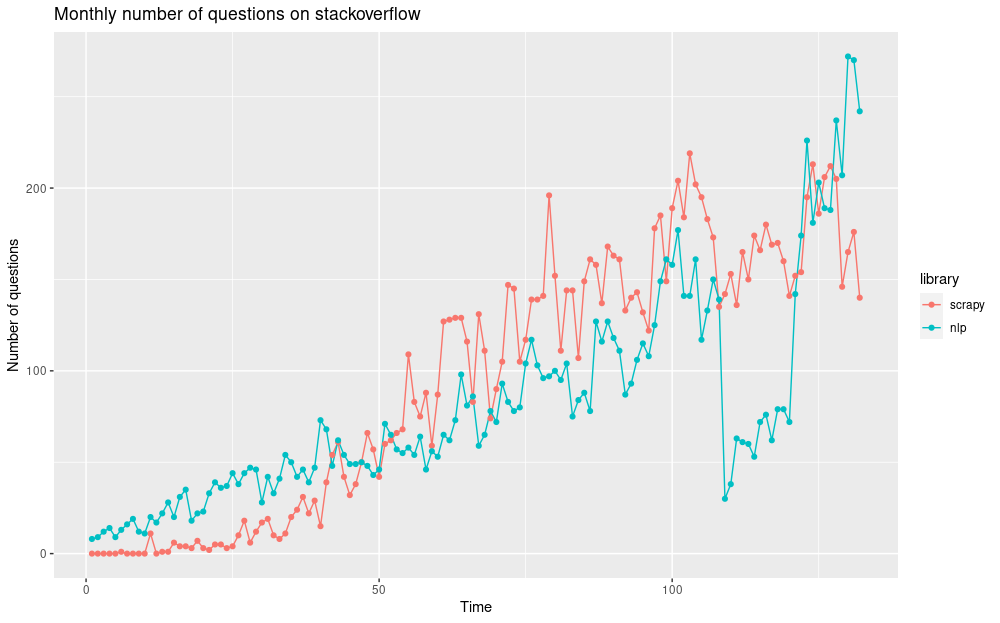}
	\caption{Number of questions asked about NLP (blue line) and scrapy (red line) on stackoverflow}\label{fig::questions}
\end{figure}


\section{Proofs of the main results}
\label{S4}

\subsection{Proof of Theorem~\ref{T1}}
\label{SS4.1}

The proof of Theorem~\ref{T1} is based on the following two lemmas.

{\lem
\label{LA.1}
Suppose that (\ref{m1.a}), (\ref{m1.b}), and (A1) are fulfilled.
Let $(\wtl_{k})_{k\in\N_0}$ and $(\wtl_{k}')_{k\in\N_0}$ be two independent copies
of the intensity process. Then
\begin{displaymath}
\sup_{k\in\N} \widetilde{E} \big\| \sqrt{\wtl_k} \,-\, \sqrt{\wtl_k'} \big\|_1 \,<\, \infty.
\end{displaymath}
}

\begin{proof}[Proof of Lemma~\ref{LA.1}]
Recall that
$\lambda_{t,i}=\sum_{j=1}^d A_{ij} \lambda_{t-1,j} + \sum_{j=1}^d B_{ij} X_{t-1,j} + Z_{t-1,i}$.
For $t\in\N$, $i\in\{1,\ldots,d\}$, we split up
\begin{eqnarray}
\label{pla1.1}
\lefteqn{ \Big| \sqrt{\wtl_{t,i}} \,-\, \sqrt{\wtl_{t,i}'} \Big| } \nonumber \\
& \leq & \sum_{j=1}^d \Big| \sqrt{ (A_{ij}+B_{ij})\; \wtl_{t-1,j} } \,-\, \sqrt{ (A_{ij}+B_{ij})\; \wtl_{t-1,j}' } \Big| \nonumber \\
& & {} \,+\, \sum_{j=1}^d \bigg\{ \Big| \sqrt{ A_{ij}\wtl_{t-1,j} \,+\, B_{ij} \wtx_{t-1,j} }
\,-\, \sqrt{ (A_{ij}+B_{ij})\; \wtl_{t-1,j} } \Big| \nonumber \\
& & \qquad \qquad \qquad
\,+\, \Big| \sqrt{ A_{ij}\wtl_{t-1,j}' \,+\, B_{ij} \wtx_{t-1,j}' } \,-\, \sqrt{ (A_{ij}+B_{ij})\; \wtl_{t-1,j}' } \Big| \bigg\}
\nonumber \\
& & {} \,+\, \Big| \sqrt{\wtz_{t-1,i}} \,-\, \sqrt{\wtz_{t-1,i}'} \Big| \nonumber \\
& \leq & \sum_{j=1}^d \big(\sqrt{A_{ij}+B_{ij}}\big) \; \Big| \sqrt{\wtl_{t-1,j}} \,-\, \sqrt{\wtl_{t-1,j}'} \Big| \nonumber \\
& & {} \,+\, \sum_{j=1}^d \sqrt{B_{ij}} \; \Big( \big|\sqrt{\wtx_{t-1,j}}-\sqrt{\wtl_{t-1,j}}\big|
\,+\, \big|\sqrt{\wtx_{t-1,j}'}-\sqrt{\wtl_{t-1,j}'}\big| \Big) \nonumber \\
& & {} \,+\, \Big| \sqrt{\wtz_{t-1,i}} \,-\, \sqrt{\wtz_{t-1,i}'} \Big| \nonumber \\
& =: & R_{t,1}^{(i)} \,+\, R_{t,2}^{(i)} \,+\, R_{t,3}^{(i)},
\end{eqnarray}
say. 
Since, for $X\sim\mbox{Poi}(\lambda)$,
$E\big|\sqrt{X}-\sqrt{\lambda}\big|\leq E|X-\lambda|/\sqrt{\lambda}\leq \sqrt{E(X-\lambda)^2/\lambda}=1$,
we obtain
\begin{eqnarray}
\label{pla1.3}
\lefteqn{ \widetilde{E}\Big( R_{t,2}^{(i)} \;\Big|\; \wtl_{t-1}, \wtl_{t-1}' \Big) } \nonumber \\
& \leq & \sum_{j=1}^d \sqrt{B_{ij}}
\;\; \bigg\{ \widetilde{E}\Big( \big| \sqrt{\wtx_{t-1,j}} - \sqrt{\wtl_{t-1,j}} \big| \;\Big|\; \wtl_{t-1}, \wtl_{t-1}' \Big)
\,+\, \widetilde{E}\Big( \big| \sqrt{\wtx_{t-1,j}'} - \sqrt{\wtl_{t-1,j}'} \big| \;\Big|\; \wtl_{t-1}, \wtl_{t-1}' \Big) \bigg\} \nonumber \\
& \leq & 2 \; \sum_{j=1}^d \sqrt{ B_{ij} }.
\end{eqnarray}
Finally, we have
\begin{equation}
\label{pla1.4}
\widetilde{E}\Big( R_{t,3}^{(i)} \;\big|\; \wtz_{t-1}, \wtz_{t-1}' \Big)
\,=\, \widetilde{E} \Big| \sqrt{\wtz_{t-1,i}} \,-\, \sqrt{\wtz_{t-1,i}'} \Big|
\,\leq\, 2 \; E\Big| \sqrt{Z_{t-1,i}} \,-\, E\sqrt{Z_{t-1,i}} \Big|.
\end{equation}
It follows from (\ref{pla1.1}) to (\ref{pla1.4}) that for $1\leq i\leq d$
\begin{displaymath}
\E\big| \sqrt{\widetilde{\lambda}_{t,i}}-\sqrt{\widetilde{\lambda}'_{t,i}} \big|
\,\leq\, \sum_{j=1}^d C_{ij}
\E\big| \sqrt{\widetilde{\lambda}_{t-1,j}}-\sqrt{\widetilde{\lambda}'_{t-1,j}} \big| \,+\, b_i,
\end{displaymath}
where $C_{ij}=\sqrt{A_{ij}+B_{ij}}$ and
$b_i=2\sum_{j=1}^d\sqrt{B_{ij}}+2\sup_{t\geq 0} E\big| \sqrt{Z_{t,i}} \,-\, E\sqrt{Z_{t,i}} \big|$.
Since $\rho(C)<1$, an application of Lemma~\ref{spectral} yields the result.
\end{proof}
\bigskip

{\lem
\label{LA.2}
Suppose that (\ref{m1.a}), (\ref{m1.b}), and (A1) are fulfilled.
Then there exists a coupling such that the following results hold true.
\begin{itemize}
\item[(i)\quad]
For $n\geq 1$,
\begin{displaymath}
\widetilde{E} \big\| \sqrt{\wtl_{k+n}} \,-\, \sqrt{\wtl_{k+n}'} \big\|_1
\,\leq\, \big\| (\sqrt{A}+2\sqrt{B})^{n-1} \big\|_1 \; \widetilde{E} \big\| \sqrt{\wtl_{k+1}} \,-\, \sqrt{\wtl_{k+1}'} \big\|_1
\end{displaymath}
and 
\begin{displaymath}
\widetilde{P}\big( \wtx_{k+n}\neq \wtx_{k+n}' \big)
\,=\, O\big( \big\| (\sqrt{A}+2\sqrt{B})^{n-1} \big\|_1 \; \widetilde{E} \big\| \sqrt{\wtl_{k+1}} \,-\, \sqrt{\wtl_{k+1}'} \big\|_1 \big).
\end{displaymath}
\item[(ii)\quad]
For $n,r\geq 1$,
\begin{eqnarray*}
\widetilde{E} \Big[ \big\| \sqrt{\wtl_{k+n+r}} \,-\, \sqrt{\wtl_{k+n+r}'} \big\|_1 \;
\1\big( \wtx_{k+n}=\wtx_{k+n}', \ldots, \wtx_{k+n+r-1}=\wtx_{k+n+r-1}' \big) \Big] \\
\leq \,  \big\| (\sqrt{A})^r \big\|_1 \; \big\| (\sqrt{A}+2\sqrt{B})^{n-1} \big\|_1 \;
\widetilde{E} \big\| \sqrt{\wtl_{k+1}} \,-\, \sqrt{\wtl_{k+1}'} \big\|_1
\end{eqnarray*}
and
\begin{eqnarray*}
\widetilde{P}\big( \wtx_{k+n+r}\neq \wtx_{k+n+r}', \wtx_{k+n}=\wtx_{k+n}', \ldots, \wtx_{k+n+r-1}=\wtx_{k+n+r-1}' \big) \\
=\, O\big( \big\| (\sqrt{A})^r \big\|_1 \; \big\| (\sqrt{A}+2\sqrt{B})^{n-1} \big\|_1
\; \widetilde{E} \big\| \sqrt{\wtl_{k+1}} \,-\, \sqrt{\wtl_{k+1}'} \big\|_1 \big).
\end{eqnarray*}
\end{itemize}
}
\bigskip

\begin{proof}[Proof of Lemma~\ref{LA.2}]
$ $\\
\mbox{(i)\quad}
Let $k+1\leq t\leq k+n-1$. Given $\wtl_t$ and $\wtl_t'$, we apply a maximal coupling for $\wtx_{t,i}$ and $\wtx_{t,i}'$
as described in Lemma~\ref{couplinglemma} below.
Furthermore, we couple $\wtz_t$ and $\wtz_t'$ such that they are equal.
Note that, for $\lambda>\lambda'\geq 0$, $X\sim\mbox{Poi}(\lambda)$ is stochastically larger than $X'\sim\mbox{Poi}(\lambda')$.
Therefore, it follows that
$\wtx_{t,j}\geq \wtx_{t,j}'$ if $\wtl_{t,j}\geq \wtl_{t,j}'$
and, vice versa, $\wtx_{t,j}\leq \wtx_{t,j}'$ if $\wtl_{t,j}\leq \wtl_{t,j}'$; see c) of Lemma~\ref{couplinglemma} below.
This allows us to apply Lemma~\ref{L.inequality} and we obtain
\begin{eqnarray*}
\lefteqn{ \widetilde{E} \Big( \big| \sqrt{ A_{ij}\wtl_{t,j} \,+\, B_{ij}\wtx_{t,j} }
\,-\, \sqrt{ A_{ij}\wtl_{t,j}' \,+\, B_{ij}\wtx_{t,j}' } \big| \;\Big|\; \wtl_t, \wtl_t', \; \wtl_{t,j}\geq \wtl_{t,j}' \Big) } \\
& = & \widetilde{E} \Big( \sqrt{ A_{ij}\wtl_{t,j} \,+\, B_{ij}\wtx_{t,j} }
\,-\, \sqrt{ A_{ij}\wtl_{t,j}' \,+\, B_{ij}\wtx_{t,j}' } \;\Big|\; \wtl_t, \wtl_t', \; \wtl_{t,j}\geq \wtl_{t,j}' \Big) \\
& \leq & \sqrt{A_{ij}} \; \big( \sqrt{\wtl_{t,j}} \,-\, \sqrt{\wtl_{t,j}'} \big)
\,+\, \sqrt{B_{ij}} \; \widetilde{E}\Big( \sqrt{\wtx_{i,j}} \,-\, \sqrt{\wtx_{i,j}'}
\;\Big|\; \wtl_t, \wtl_t', \; \wtl_{t,j}\geq \wtl_{t,j}' \Big) \\
& \leq & \big(\sqrt{ A_{ij} } + 2\sqrt{ B_{ij} }\big) \; \Big| \sqrt{\wtl_{t,j}} \,-\, \sqrt{\wtl_{t,j}'} \Big|
\end{eqnarray*}
and, similarly, 
\begin{eqnarray*}
\lefteqn{ \widetilde{E} \Big( \big| \sqrt{ A_{ij}\wtx_{t,j} \,+\, B_{ij}\wtl_{t,j} }
\,-\, \sqrt{ A_{ij}\wtx_{t,j}' \,+\, B_{ij}\wtl_{t,j}' } \big| \;\Big|\; \wtl_t, \wtl_t', \; \wtl_{t,j}\leq \wtl_{t,j}' \Big) } \\
& = & \widetilde{E} \Big( \sqrt{ A_{ij}\wtl_{t,j}' \,+\, B_{ij}\wtx_{t,j}' }
\,-\, \sqrt{ A_{ij}\wtl_{t,j} \,+\, B_{ij}\wtx_{t,j} } \;\Big|\; \wtl_t, \wtl_t', \; \wtl_{t,j}\leq \wtl_{t,j}' \Big) \\
& \leq & \big(\sqrt{ A_{ij} } + 2\sqrt{ B_{ij} }\big) \; \Big| \sqrt{\wtl_{t,j}} \,-\, \sqrt{\wtl_{t,j}'} \Big|.
\end{eqnarray*}

Therefore, and since $\wtz_t=\wtz_t'$ we obtain that
\begin{eqnarray*}
\widetilde{E} \Big( \big| \sqrt{\wtl_{t+1,i}} \,-\, \sqrt{\wtl_{t+1,i}'} \big| \;\Big|\; \wtl_t, \wtl_t' \Big)
& \leq & \sum_{j=1}^d \widetilde{E} \Big( \big| \sqrt{ A_{ij}\wtl_{t,j} \,+\, B_{ij}\wtx_{t,j} }
\,-\, \sqrt{ A_{ij}\wtl_{t,j}' \,+\, B_{ij}\wtx_{t,j}' } \big| \;\Big|\; \wtl_t, \wtl_t' \Big) \\
& \leq & \sum_{j=1}^d \big( \sqrt{ A_{ij} } + 2\sqrt{ B_{ij} } \big) \; \Big| \sqrt{\wtl_{t,j}} \,-\, \sqrt{\wtl_{t,j}'} \Big|.
\end{eqnarray*}
Taking expectation on both sides of this inequality we see that 
\begin{displaymath}
\widetilde{E} \big|  \sqrt{\wtl_{t+1,i}} \,-\, \sqrt{\wtl_{t+1,i}'} \big|
\,\leq\, \sum_{j=1}^d \big( \sqrt{ A_{ij} } + 2\sqrt{ B_{ij} } \big) \; \widetilde{E}\big| \sqrt{\wtl_{t,j}} \,-\, \sqrt{\wtl_{t,j}'} \big|.
\end{displaymath}
{}From Lemma~\ref{spectral} with $b_i=0$, we obtain 
\begin{displaymath}
\widetilde{E} \big\| \sqrt{\wtl_{k+n}} \,-\, \sqrt{\wtl_{k+n}'} \big\|_1
\,\leq\, \big\| (\sqrt{A} \,+\, 2\sqrt{B})^{n-1} \big\|_1 \; \widetilde{E} \big\| \sqrt{\wtl_{k+1}} \,-\, \sqrt{\wtl_{k+1}'} \big\|_1.
\end{displaymath}
\mbox{(ii)\quad}
For $t>k+n$ and $1\leq i\leq d$, we have on the event $H:=\left\{\wtx_{k+n}=\wtx_{k+n}',\ldots,\wtx_{t-1}=\wtx_{t-1}\right\}$,
\begin{displaymath}
\big| \sqrt{\wtl_{t,i}} \,-\, \sqrt{\wtl_{t,i}'} \big | \;
\,\leq\, \sum_{j=1}^d (\sqrt{A})_{ij} \; \big| \sqrt{\wtl_{t-1,j}} \,-\, \sqrt{\wtl_{t-1,j}'} \big |.
\end{displaymath}
Iterating the previous bound, as in Lemma \ref{spectral}, we get 
\begin{displaymath}
\big\Vert \sqrt{\widetilde{\lambda}_t}-\sqrt{\widetilde{\lambda}'_t}\big\Vert_1
\,\leq\, \big\Vert (\sqrt{A})^{t-(k+n)}\big\Vert_1
\big\Vert \sqrt{\widetilde{\lambda}_{k+n}}- \sqrt{\widetilde{\lambda}'_{k+n}}\big\Vert_1
\end{displaymath}
on the event $H$.

In both cases (i) and (ii), 
the upper estimates for the probabilities $\widetilde{P}\big( \wtx_{k+n}\neq \wtx_{k+n}' \big)$ and 
$\widetilde{P}\big( \wtx_{k+n+r}\neq \wtx_{k+n+r}', \wtx_{k+n}=\wtx_{k+n}', \ldots, \wtx_{k+n+r-1}=\wtx_{k+n+r-1}' \big)$
follow from the maximal coupling and the fact that
$d_{TV}\big(\mbox{Poi}(\lambda),\mbox{Poi}(\lambda')\big)\leq (2/e) \big|\sqrt{\lambda}-\sqrt{\lambda'}\big|$.
\end{proof}
\bigskip

\noindent
Now we turn to the proof of our first major result.
\bigskip

\begin{proof}[Proof of Theorem~\ref{T1}]
Let $k\in\N_0$, and let $(\wtx_t)_{t\in\N_0}$ and $(\wtx_t')_{t\in\N_0}$ be two versions of the count process,
where $(\wtx_0,\ldots,\wtx_k)$ and $(\wtx_0',\ldots,\wtx_k')$ are independent,
and where $\wtx_{k+1},\wtx_{k+2},\ldots$ are coupled with their respective counterparts $\wtx_{k+1}',\wtx_{k+2}',\ldots$
as described in the proof of Lemma~\ref{LA.2}.
Then we obtain from (\ref{1.2}), Lemma~\ref{LA.1}, Lemma~\ref{LA.2}, and (\ref{l1-sr}) that
\begin{eqnarray*}
\beta^X(k,n)
& \leq & \widetilde{P}\left( \wtx_{k+n+r} \neq \wtx_{k+n+r}' \quad \mbox{for some } r\in\N_0 \right) \\
& \leq & \widetilde{P}\left( \wtx_{k+n} \neq \wtx_{k+n}' \right) \\
& & {} \,+\, \sum_{r=1}^\infty \widetilde{P}\left( \wtx_{k+n+r} \neq \wtx_{k+n+r}',
\wtx_{k+n}=\wtx_{k+n}',\ldots,\wtx_{k+n+r-1}=\wtx_{k+n+r-1}' \right) \\
& = & O\left( \kappa^n \right).
\end{eqnarray*}
\end{proof}

\subsection{Proof of Theorem~\ref{T2+}}
\label{SS4.2}

\begin{proof}
Let~$k\in\N$. To define our coupling we consider
a sequence of i.i.d.~random variables $\widetilde{J}:=\big(N^{(t)},\widetilde{Z}_t\big)_{t\geq 0}$ such that 
$N^{(t)}=\big(N^{(t,1)},\ldots,N^{(t,d)}\big)$ is a multivariate point process taking values in $\N_0^d$  
and such that, for $1\leq i\leq d$, $N^{(t,i)}=\big(N^{(t,i)}_u\big)_{u\geq 0}$ is a Poisson process with intensity~$1$. 
Moreover, $\widetilde{Z}_t$ is a copy of $Z_t$.
For $\lambda\in \R_+^d$, we set $N^{(t)}_{\lambda}=\big(N^{(t,1)}_{\lambda_1},\ldots,N^{(t,d)}_{\lambda_d}\big)$.
We also consider an independent copy $\widetilde{J}':=\big(\bar{N}^{(t)},\widetilde{Z}'_t\big)_{t\geq 0}$ of $\widetilde{J}$.
Finally, we consider two independent copies $\widetilde{\lambda}_0$ and $\widetilde{\lambda}'_0$ of $\lambda_0$,
independent of the pair $\left(\widetilde{J},\widetilde{J}'\right)$ and, for $0\leq t\leq k$, we set 
$\widetilde{X}_t=N^{(t)}_{\widetilde{\lambda}_t}$, $\widetilde{X}_t'=\bar{N}^{(t)}_{\widetilde{\lambda}'_t}$.
Then $\widetilde{X}_t\mid\widetilde{\lambda}_t\sim\mbox{Poi}_{dep}(\widetilde{\lambda}_t)$
and $\widetilde{X}_t'\mid\widetilde{\lambda}_t'\sim\mbox{Poi}_{dep}(\widetilde{\lambda}_t')$.
Furthermore, for $1\leq t\leq k$,
\begin{displaymath}
\widetilde{\lambda}_t \,=\, A \widetilde{\lambda}_{t-1} \,+\, B \widetilde{X}_t+\widetilde{Z}_{t-1},
\qquad \widetilde{\lambda}'_t \,=\, A\widetilde{\lambda}'_{t-1} \,+\, B\widetilde{X}_t'+\widetilde{Z}'_{t-1}.
\end{displaymath}
For $t\geq k+1$, we set $\widetilde{X}_t=N^{(t)}_{\widetilde{\lambda}_t}$ and $\widetilde{X}'_t=N^{(t)}_{\widetilde{\lambda}'_t}$ with the same recursion formula for $\widetilde{\lambda}_t$ and replacing $\widetilde{Z}_{t-1}$ by $\widetilde{Z}_{t-1}'$ in the recursive formula for  $\widetilde{\lambda}'_t$. We have, for any $t\geq 1$ and $1\leq i\leq d$,
\begin{displaymath} 
v_{t,i} \,:=\, \E\widetilde{\lambda}_{t,i}\leq \E Z_{t-1,i} +\\ \sum_{j=1}^d \left(A_{ij}+B_{ij}\right)v_{t-1,j}.
\end{displaymath}
Setting $b_i=\sup_{t\geq 0}\E Z_{t,i}$, we can apply Lemma \ref{spectral} which will ensure that
$\sup_{t\geq 1}\Vert v_t\Vert_1<\infty$.
The same property holds true if $v_t$ is replaced with $v_t'$ with $v_{t,i}'=\E\widetilde{\lambda}_{t,i}'$, $1\leq i\leq d$.
We then get 
\begin{displaymath}
\sup_{t\geq 0}\E\Vert \widetilde{\lambda}_t-\widetilde{\lambda}_t'\Vert_1 \,<\, \infty. 
\end{displaymath}
Moreover for $t\geq k+1$, we have
\begin{displaymath}
\P\left(\widetilde{X}_t\neq \widetilde{X}_t'\right)
\,\leq\, \sum_{i=1}^d\P\left(\widetilde{X}_{t,i}\neq \widetilde{X}'_{t,i}\right)
\,\leq\, \sum_{i=1}^d\E\left \vert \widetilde{X}_{t,i}-\widetilde{X}'_{t,i}\right\vert 
\,=\, \sum_{i=1}^d \E\left\vert \widetilde{\lambda}_{t,i}-\widetilde{\lambda}'_{t,i}\right\vert.
\end{displaymath}
Setting $w_{t,i}=\E\left\vert \widetilde{\lambda}_{t,i}-\widetilde{\lambda}'_{t,i}\right\vert$
for $1\leq i\leq d$ and $t\geq k+1$, we have
\begin{displaymath}
w_{t,i} \,\leq\, \sum_{j=1}^d \left(A_{ij}+B_{ij}\right)w_{t-1,j}.
\end{displaymath}
In matrix notation we obtain, with $w_t=(w_{t,1},\ldots,w_{t,n})^T$,
\begin{displaymath}
\| w_{k+n+r} \|_1 \,\leq\, \big\| (A+B)^{n+r-1} \big\|_1 \|w_{k+1}\|_1,
\end{displaymath}
which implies, in conjunction with (\ref{l1-sr}),
\begin{displaymath}
\beta^{X}(k,n) \,\leq\, \sum_{r=0}^\infty \P\left(\widetilde{X}_{k+n+r}\neq \widetilde{X}_{k+n+r}'\right)
\,=\, O\big( \kappa^n \big).
\end{displaymath}
\end{proof}

\subsection{Proof of Theorem~\ref{T2}}
\label{SS4.3}

The proof of Theorem~\ref{T2} is based on the following two lemmas.

{\lem
\label{LA.3}
Suppose that (\ref{m2.a}), (\ref{m2.b}), and (A2) are fulfilled.
Let $(\wtx_k)_{k\in\N_0}$ and $(\wtx_k)_{k\in\N_0}$ be two independent copies of the count process. Then
\begin{displaymath}
\sup_{k\in\N_0} \widetilde{E} \big\| \sqrt{\wtx_k} \,-\, \sqrt{\wtx_k'} \big\|_1 \,<\, \infty.
\end{displaymath}
}

\begin{proof}[Proof of Lemma~\ref{LA.3}]
Recall that $X_{t,i}=\sum_{j=1}^d Y_t^{i,j}+Z_{t,i}$, where $Y_t^{i,j}=\sum_{s=1}^{X_{t-1,j}} Y_{t,s}^{i,j}$.
Furthermore, let $\lambda_t^{i,j}=E( Y_t^{i,j} \mid X_{t-1} ) =B_{ij} X_{t-1,j}$.
Then
\begin{eqnarray}
\label{pla3.1}
\Big| \sqrt{ \wtx_{t,i} } \,-\, \sqrt{ \wtx_{t,i}' } \Big|
& \leq & \Big| \sqrt{ \sum_{j=1}^d \wtl_t^{i,j} } \,-\, \sqrt{ \sum_{j=1}^d \wtl_t^{i,j'} } \Big| \nonumber \\
& & {} \,+\, \Big| \sqrt{ \sum_{j=1}^d \wty_t^{i,j} } \,-\, \sqrt{ \sum_{j=1}^d \wtl_t^{i,j} } \Big| 
\;+\; \Big| \sqrt{ \sum_{j=1}^d \wty_t^{i,j'} } \,-\, \sqrt{ \sum_{j=1}^d \wtl_t^{i,j'} } \Big| \nonumber \\
& & {} \,+\, \Big| \sqrt{\wtz_{t,i}} \,-\, \sqrt{\wtz_{t,i}} \Big| \nonumber \\
& \leq & \sum_{j=1}^d \Big| \sqrt{ \wtl_t^{i,j} } \,-\, \sqrt{ \wtl_t^{i,j'} } \Big| \nonumber \\
& & {} \,+\, \sum_{j=1}^d \bigg\{ \big| \sqrt{\wty_t^{i,j}} \,-\, \sqrt{\wtl_t^{i,j}} \big| 
\;+\; \big| \sqrt{\wty_t^{i,j'}} \,-\, \sqrt{\wtl_t^{i,j'}} \big| \bigg\} \nonumber \\
& & {} \,+\, \Big| \sqrt{\wtz_{t,i}} \,-\, \sqrt{\wtz_{t,i}} \Big| \nonumber \\
& = & S_{t,i}^{(1)} \,+\, S_{t,i}^{(2)} \,+\, S_{t,i}^{(3)},
\end{eqnarray}
say.

We have
\begin{equation}
\label{pla3.2}
\widetilde{E} \Big( S_{t,i}^{(1)} \;\Big|\; \wtx_{t-1},\wtx_{t-1}' \Big)
\,=\, \sum_{j=1}^d \Big| \sqrt{ \wtl_t^{i,j} } \,-\, \sqrt{ \wtl_t^{i,j'} } \Big|
\,=\, \sum_{j=1}^d \sqrt{B_{ij}} \; \Big| \sqrt{\wtx_{t-1,j}} \,-\, \sqrt{\wtx_{t-1,j}'} \Big|.
\end{equation}
Since, for $Y\sim\mbox{Bin}(n,p)$, $E|\sqrt{Y}-\sqrt{np}|\leq E|Y-np|/\sqrt{np}\leq \sqrt{E(Y-np)^2/np}\leq 1$
we obtain that 
\begin{eqnarray}
\label{pla3.3}
\lefteqn{ \widetilde{E} \big( S_{t,i}^{(2)} \;\mid\; \wtx_{t-1}, \wtx_{t-1}' \big) } \nonumber \\
& \leq & \sum_{j=1}^d \bigg\{ \widetilde{E}\Big( \big| \sqrt{\wty_t^{i,j}} - \sqrt{\wtl_t^{i,j}} \big| \;\Big|\; \wtx_{t-1}, \wtx_{t-1}' \Big)
\,+\, \widetilde{E}\Big( \big| \sqrt{\wty_t^{i,j'}} - \sqrt{\wtl_t^{i,j'}} \big| \;\Big|\; \wtx_{t-1}, \wtx_{t-1}' \Big) \bigg\} \nonumber \\
& \leq & 2\, d.
\end{eqnarray}
Finally, we have
\begin{equation}
\label{pla3.4}
\widetilde{E}\Big( S_{t,i}^{(3)} \;\big|\; \wtz_t, \wtz_t' \Big)
\,=\, \widetilde{E} \Big| \sqrt{\wtz_{t,i}} \,-\, \sqrt{\wtz_{t,i}'} \Big|
\,\leq\, 2 \; E\Big| \sqrt{Z_{t,i}} \,-\, E\sqrt{Z_{t,i}} \Big|.
\end{equation}
It follows from (\ref{pla3.1}) to (\ref{pla3.4}) that
\begin{displaymath}
\widetilde{E}\Big( \big| \sqrt{\wtx_{t,i}}\,-\, \sqrt{\wtx_{t,i}'} \big| \;\Big|\; \wtx_{t-1}, \wtx_{t-1}' \Big)
\,\leq\, \sum_{j=1}^d \sqrt{B}_{ij} \;\; \big| \sqrt{\wtx_{t-1,j}} \,-\, \sqrt{\wtx_{t-1,j}'} \big| \;+\; b_i,
\end{displaymath}
where $b_i=2d+2\sup_{t\geq 1}E\big| \sqrt{Z_{t,i}} \,-\, E\sqrt{Z_{t,i}} \big|$.
Taking expectation on both sides, one can apply Lemma~\ref{spectral} with $C=\sqrt{B}$,
and $v_{t,i}=\widetilde{E}\Big( \big| \sqrt{\wtx_{t,i}}\,-\, \sqrt{\wtx_{t,i}'} \big| \Big)$ to conclude.
\end{proof}

{\lem
\label{LA.4}
Suppose that (\ref{m2.a}), (\ref{m2.b}), and (A2) are fulfilled.
Then there exists a coupling such that
\begin{displaymath}
\widetilde{E} \Big\| \sqrt{\wtx_{k+n-1}} \,-\, \sqrt{\wtx_{k+n-1}'} \Big\|_1
\,\leq\, \big\| (2\, \sqrt{B})^{n-1} \big\| \, \widetilde{E} \Big\| \sqrt{\wtx_k} \,-\, \sqrt{\wtx_k'} \Big\|_1.
\end{displaymath}
}
\bigskip

\begin{proof}[Proof of Lemma~\ref{LA.4}]
We apply a step-wise maximal coupling, that is, 
we choose $\wtz_t$ and $\wtz_t'$ such that they are equal,
and $\wty_t^{i,j}$ and $\wty_t^{i,j'}$ are coupled as described in Lemma~\ref{couplinglemma} below.
Then $\widetilde{P}\big( \wty_t^{i,j}\neq\wty_t^{i,j'} \;\big|\; \wtx_{t-1}, \wtx_{t-1}' \big)
\,=\, d_{TV}(P^{Y_t^{i,j}\mid X_{t-1}=\wtx_{t-1}}, P^{Y_t^{i,j}\mid X_{t-1}=\wtx_{t-1}'})$.
Note that, for $m>m'\geq 0$, $X\sim\mbox{Bin}(m,p)$ is stochastically larger than $X'\sim\mbox{Bin}(m',p)$.
Therefore, it follows that
$\wty_t^{i,j}\geq \wty_t^{i,j'}$ if $\wtl_t^{i,j}\geq \wtl_t^{i,j'}$
and, vice versa, $\wty_t^{i,j}\leq \wty_t^{i,j'}$ if $\wtl_t^{i,j}\leq \wtl_t^{i,j'}$; see c) of Lemma~\ref{couplinglemma} below.
Then, by Lemma~\ref{L_bin},
\begin{eqnarray*}
\widetilde{E}\Big( \big| \sqrt{\wtx_{t,i}} \,-\, \sqrt{\wtx_{t,i}'} \big| \;\Big|\; \wtx_{t-1},\wtx_{t-1}' \Big)
& \leq & \sum_{j=1}^d \widetilde{E}\Big( \big| \sqrt{\wty_t^{i,j}} \,-\, \sqrt{\wty_t^{i,j'}} \big| \;\Big|\; \wtx_{t-1},\wtx_{t-1}' \Big) \\
& \leq & \sum_{j=1}^d \Big| \widetilde{E}\Big( \sqrt{\wty_t^{i,j}} \,-\, \sqrt{\wty_t^{i,j'}} \;\Big|\; \wtx_{t-1},\wtx_{t-1}' \Big) \Big| \\
& \leq & \sum_{j=1}^d 2\, \sqrt{B_{ij}} \; \Big| \sqrt{\wtx_{t-1,j}} \,-\, \sqrt{\wtx_{t-1,j}'} \Big|.
\end{eqnarray*}
After taking expectation we can apply Lemma~\ref{spectral} with $b=0$, $C=2\sqrt{B}$ and
$v_{t,i}=\widetilde{E}\Big( \big| \sqrt{\wtx_{t,i}} \,-\, \sqrt{\wtx_{t,i}'}  \big|\Big)$
and obtain that
\begin{displaymath}
\widetilde{E} \Big\| \sqrt{\wtx_{k+n-1}} \,-\, \sqrt{\wtx_{k+n-1}'} \Big\|_1
\,\leq\, \big\| (2\sqrt{B})^{n-1} \Big\|_1 \, \widetilde{E} \Big\| \sqrt{\wtx_k} \,-\, \sqrt{\wtx_k'} \Big\|_1.
\end{displaymath}
\end{proof}
\bigskip

Lemmas~\ref{LA.3} and~\ref{LA.4} allow us to prove our second major result.

\begin{proof}[Proof of Theorem~\ref{T2}]
Let $k\in\N_0$, and let $(\wtx_t)_{t\in\N_0}$ and $(\wtx_t')_{t\in\N_0}$ be two versions of the count process,
where $(\wtx_0,\ldots,\wtx_k)$ and $(\wtx_0',\ldots,\wtx_k')$ are independent,
and where $\wtx_{k+1},\wtx_{k+2},\ldots$ are coupled with their respective counterparts $\wtx_{k+1}',\wtx_{k+2}',\ldots$
as described in the proof of Lemma~\ref{LA.4}.
Recall that the random variables $Y_{t,s}^{i,j}$ are independent which implies that
the terms $\sum_{s=1}^{X_{t-1,j}}Y_{t,s}^{i,j}$ ($i=1,\ldots,d$) are conditionally independent given~$X_{t-1}$,
and that $\sum_{s=1}^{X_{t-1,j}}Y_{t,s}^{i,j}\mid X_{t-1}\sim \mbox{Bin}(X_{t-1,j},B_{ij})$ .
Then we obtain from (\ref{1.2a}), Lemma~\ref{LA.3},  Lemma~\ref{LA.4}, Lemma~\ref{L5.3}, and (\ref{l1-sr})
\begin{eqnarray*}
\beta^X(k,n)
& \leq & \widetilde{P}\left( \wtx_{k+n} \neq \wtx_{k+n}' \right) \\
& \leq & \sum_{i=1}^d \widetilde{P}\left( \wtx_{k+n,i} \neq \wtx_{k+n,i}' \right) \\
& \leq & O\left( \widetilde{E}\big\| \sqrt{\wtx_{k+n-1}} - \sqrt{\wtx_{k+n-1}'} \big\|_1 \right) \\
& = & O\left( \big\| (2\sqrt{B})^{n-1} \big\|_1 \right) \,=\, O\big( \kappa^n \big).
\end{eqnarray*}
\end{proof}

\subsection{Proof of Theorem \ref{T2++}}
\label{SS4.4}

We consider two independent copies
$\widetilde{J}:=\left\{\left(\widetilde{Y}_t,\widetilde{Z}_t\right):t\in \N_0\right\}$ and
$\widetilde{J}':=\left\{\left(\widetilde{Y}'_t,\widetilde{Z}_t'\right):t\in \N_0\right\}$ of
$\left\{\left(Y_t,Z_t\right): t\in\N_0\right\}$,
with $Y_t=\left\{Y_{t,s}^{ij}: 1\leq i,j\leq d, s\in\N_0\right\}$ for any $t\in\N_0$.
We then define a process $\left(\widetilde{X}_t\right)_{t\in\N_0}$ defined from the recursions (\ref{m2.c})
and inputs $\widetilde{J}$ and another process $\left(\widetilde{X}'_t\right)_{t\in\N_0}$ also satisfying (\ref{m2.c}) but 
for which $\left(\widetilde{Y}_t,\widetilde{Z}_t\right)$ is replaced by 
$\left(\widetilde{Y}_t',\widetilde{Z}_t'\right)$ as soon as $t\geq k+1$.
For $t\in\N_0$ and $1\leq i\leq d$, set $v_{t,i}=\E\left\vert \widetilde{X}_{t,i}\right\vert$ and
$b_i=\sup_{s\geq 0}\E\vert Z_{s,i}\vert$. Since
\begin{displaymath}
v_{t,i} \,\leq\, \sum_{j=1}^d b_{ij}v_{t-1,j} \,+\, b_i,
\end{displaymath}
an application of Lemma~\ref{spectral} entails that $\sup_{t\geq 0}\Vert v_t\Vert_1<\infty$.
Moreover if $t\geq k+1$, setting $w_{t,i}=\E\left\vert \widetilde{X}_{t,i}-\widetilde{X}'_{t,i}\right\vert$, we get 
\begin{displaymath}
w_{t,i} \,\leq\, \sum_{j=1}^d B_{ij}w_{t-1,j},
\end{displaymath}
which leads to $\Vert w_t\Vert_1\leq \big\Vert B^{t-k}\big\Vert_1\Vert w_k\Vert_1$.
Moreover, the sequence $\left(w_k\right)_{k\in\N_0}$ is bounded. We conclude that 
\begin{eqnarray*}
\hspace*{4cm} \beta^X(k,n) & \leq & \P\left(\widetilde{X}_{k+n}\neq \widetilde{X}'_{k+n}\right) \\
& \leq & \sum_{i=1}^d\P\left(\widetilde{X}_{k+n,i}\neq \widetilde{X}'_{k+n,i}\right)\\
& \leq & \Vert w_{k+n}\Vert_1\\
& = & O\left( \kappa^n \right). \hspace*{6.2cm}\Box
\end{eqnarray*}

\subsection{Proof of Proposition~\ref{P3.1}}
\label{SS4.5}

\begin{proof}[Proof of Proposition~\ref{P3.1}]
It remains to prove (\ref{3.13a}), (\ref{3.13b}), (\ref{3.14a}), (\ref{3.14b}), and (\ref{3.15c}).
Recall that (\ref{3.12a}) provides the representation
\begin{displaymath}
X_t \,=\, b^t\, X_0 \,+\, \sum_{s=0}^{t-1} b^s\, \gamma_{t-s}
\,+\, \sum_{s=0}^{t-1} b^s\, \big( \varepsilon_{t-s} \,+\, Z_{t-s} \,-\, \gamma_{t-s} \big).
\end{displaymath}
First we derive upper estimates for the third term on the right-hand side of this equation which show
in particular that $X_t$ is dominated by its non-stochastic part.
We have that
\begin{eqnarray*}
E\big[ (\varepsilon_{t-s}+Z_{t-s}-\gamma_{t-s})^2 \big]
& = & E\big[ E\big( (\varepsilon_{t-s}+Z_{t-s}-\gamma_{t-s})^2 \mid \F_{t-s-1} \big) \big] \\
& = & E\big[ \var( \varepsilon_{t-s} \mid \F_{t-s-1} ) \,+\, \var( Z_{t-s} \mid \F_{t-s-1} ) \big] \\
& = & E\big[ \kappa\, X_{t-s-1} \,+\, \gamma_{t-s} \big] \\
& = & \big( \kappa/(1-b) \,+\, 1 \big) \, \gamma_{t-s} \, \big(1+o(1)\big),
\end{eqnarray*}
which leads to
\begin{displaymath}
\Big\| \sum_{s=0}^{t-1} b^s\, \big( \varepsilon_{t-s} \,+\, Z_{t-s} \,-\, \gamma_{t-s} \big) \Big\|_2
\,=\, O\big( \sqrt{\gamma_t} \big).
\end{displaymath}
Therefore, we obtain from (\ref{3.12}) that
\begin{eqnarray*}
\sum_{t=1}^n E\big[ X_{t-1}^2 \big]
& = & X_0^2 \,+\, \sum_{t=1}^{n-1} \big( EX_{t-1} \big)^2 \,+\, O\Big( \sum_{t=1}^{n-1} \gamma_t \Big) \\
& = & \frac{r_n}{(1-b)^2} \, \big( 1 \,+\, o(1) \big),
\end{eqnarray*}
i.e., (\ref{3.13a}) is proved.

Regarding higher moments, note first that, for $X\sim\mbox{Bin}(n,p)$,
\begin{displaymath}
E\big[ (X \,-\, EX)^4 \big]
\,=\, 3\big( np(1-p) \big)^2 \,+\, np(1-p)\big(1-6p(1-p)\big)
\end{displaymath}
(see e.g. \citet[Ch.~3.2, p.~107]{JKK92}) and, for $X\sim\mbox{Poi}(\lambda)$,
\begin{displaymath}
E\big[ (X \,-\, EX)^4 \big]
\,=\, 3\lambda^2 \,+\, \lambda
\end{displaymath}
(see e.g. \citet[Ch.~4.3, p.~157]{JKK92}).
This implies that
\begin{eqnarray*}
E\big[ (\varepsilon_{t-s}+Z_{t-s}-\gamma_{t-s})^4 \big]
& = & E\big[ E\big( (\varepsilon_{t-s}+Z_{t-s}-\gamma_{t-s})^4 \mid \F_{t-s-1} \big) \big] \\
& = & O\big( \gamma_{t-s}^2 \big)
\end{eqnarray*}
and, therefore,
\begin{displaymath}
\Big\| \sum_{s=0}^{t-1} b^s\, \big( \varepsilon_{t-s}+Z_{t-s}-\gamma_{t-s} \big) \Big\|_4
\,=\, O\big( \sqrt{\gamma_t} \big).
\end{displaymath}
This implies, again by (\ref{3.12}), that
\begin{eqnarray*}
\sum_{t=1}^n E\big[ X_{t-1}^3 \big]
& = & X_0^3 \,+\, \sum_{t=1}^{n-1} \big( EX_{t-1} \big)^3 \,+\, O\Big( \sum_{t=1}^{n-1} \gamma_t^{9/4} \Big) \\
& = & \frac{s_n}{(1-b)^3} \, \big( 1 \,+\, o(1) \big),
\end{eqnarray*}
i.e., (\ref{3.13b}) is proved.

In order to prove (\ref{3.14a}) and (\ref{3.14b}) to hold, we first truncate the $X_t$. Let
\begin{displaymath}
\bar{X}_t \,:=\, X_t \, \1\big( X_t \,\leq\, EX_t \,+\, \sqrt{\gamma_t}\, n^\delta \big),
\end{displaymath}
where $0<\delta<1/4$.
We obtain that
\begin{eqnarray*}
\lefteqn{ E\big[ X_t^2 \,-\, \bar{X}_t^2 \big] } \\
& = & E\big[ X_t^2 \, \1\big( \sum_{s=0}^{t-1} b^s( \varepsilon_{t-s} \,+\, Z_{t-s} \,-\, \gamma_{t-s})
\,>\, \sqrt{\gamma_t} \, n^\delta \big) \big] \\
& \leq & 2 \, \big( EX_t \big)^2 \, P\big( \sum_{s=0}^{t-1} b^s( \varepsilon_{t-s} \,+\, Z_{t-s} \,-\, \gamma_{t-s})
\,>\, \sqrt{\gamma_t} \, n^\delta \big) \\
& & {} \,+\, 2\, E\big[ \big( \sum_{s=0}^{t-1} b^s( \varepsilon_{t-s} \,+\, Z_{t-s} \,-\, \gamma_{t-s}) \big)^2
\, \1\big( \sum_{s=0}^{t-1} b^s( \varepsilon_{t-s} \,+\, Z_{t-s} \,-\, \gamma_{t-s})
\,>\, \sqrt{\gamma_t} \, n^\delta \big) \big] \\
& = & O\big( \gamma_t^2 \, E\big[ \big( \sum_{s=0}^{t-1} b^s( \varepsilon_{t-s} \,+\, Z_{t-s} \,-\, \gamma_{t-s}) \big)^4 \big]
/ (\gamma_t^2\,n^{4\delta}) \big) \\
& & {} \,+\, 2\, E\big[ \big( \sum_{s=0}^{t-1} b^s( \varepsilon_{t-s} \,+\, Z_{t-s} \,-\, \gamma_{t-s}) \big)^4 \big]
/ (\gamma_t\,n^{2\delta}) \big) \\
& = & O\big( \gamma_t^2\, n^{-4\delta} \,+\, \gamma_t\, n^{-2\delta} \big) \,=\, O\big( \gamma_t^2\, n^{-2\delta} \big),
\end{eqnarray*}
and therefore
\begin{subequations}
\begin{equation}
\label{pp31.1a}
\sum_{t=1}^n EX_{t-1}^2 \,-\, \sum_{t=1}^n E\bar{X}_{t-1}^2
\,=\, O\big( n^{-2\delta}\, \sum_{t=1}^n \gamma_{t-1}^2 \big) \,=\, O\big( r_n \, n^{-2\delta} \big).
\end{equation}
Since $X_t-\bar{X}_t$ is non-negative, we also obtain that
\begin{equation}
\label{pp31.1b}
\sum_{t=1}^n X_{t-1}^2 \,-\, \sum_{t=1}^n \bar{X}_{t-1}^2
\,=\, O_P\big( r_n \, n^{-2\delta} \big).
\end{equation}
Recall that the $\beta$-mixing coefficients serve as an upper bound for the $\alpha$-mixing coefficients.
Using a covariance inequality for $\alpha$-mixing random variables (see e.g.~\citet[Thm.~3, Sect.~1.2.2]{Dou94})
we obtain, for arbitrary $\tau>0$,
\begin{eqnarray}
\label{pp31.1c}
E\big[ \big( \sum_{t=1}^n \bar{X}_{t-1}^2 \,-\, \sum_{t=1}^n E\bar{X}_{t-1}^2 \big)^2 \big]
& = & \sum_{s,t=1}^n \cov\big( \bar{X}_{s-1}^2, \bar{X}_{t-1}^2 \big) \nonumber \\
& \leq & 8\, \sum_{s,t=1}^n \alpha(|s-t|)^{(2+\tau)/\tau} \, \big\| \bar{X}_{s-1}^2 \big\|_{2+\tau}
\, \big\| \bar{X}_{t-1}^2 \big\|_{2+\tau} \nonumber \\
& = & O\big( \sum_{s=1}^n (\gamma_{s-1}^2 + \gamma_{s-1} n^{2\delta})
\, (\gamma_{n-1}^2 + \gamma_{n-1} n^{2\delta}) \big) \nonumber \\
& = & O\big( r_n^2 \, n^{4\delta-1} \big).
\end{eqnarray}
(The last equality follows from the fact that $n\gamma_n^2=O(r_n)$.)
\end{subequations}
{}From (\ref{pp31.1a}) to (\ref{pp31.1c}) we obtain that
\begin{displaymath}
\sum_{t=1}^n X_{t-1}^2 \,-\, \sum_{t=1}^n EX_{t-1}^2 \,=\, o_P\big( r_n \big),
\end{displaymath}
which proves in conjunction with (\ref{3.13a}) relation (\ref{3.14a}).

(\ref{3.14b}) can be proved by analogous arguments.

Finally, we have to show that (\ref{3.15c}) is satisfied.
Note that, for $X\sim\mbox{Bin}(n,p)$, $E[(X-EX)^8]=O((np)^4)$ and, for $X\sim\mbox{Poi}(\lambda)$,
$E[(X-EX)^8]=O(\lambda^4)$; see \citet[Ch.~3.2 and Ch.~4.3]{JKK92}.
This implies that
$E[\varepsilon_t^8]=E[E(\varepsilon_t^8|\F_{t-1})]=O\big(E[E(X_{t-1}^4+\gamma_{t-1}^4|\F_{t-1})]\big)=O(\gamma_{t-1}^4)$.
Therefore we obtain
\begin{eqnarray*}
\sum_{t=1}^n E\big[ Z_{n,t}^{8/3} \big]
& = & \sum_{t=1}^n E\big[ X_{t-1}^{8/3}\, \varepsilon_t^{8/3}\, s_n^{-4/3} \big] \\
& \leq & \sum_{t=1}^n \Big( \underbrace{E\big[ X_{t-1}^4 \big]}_{=\,O(\gamma_{t-1}^4)} \Big)^{2/3}
\, \Big( \underbrace{E\big[ \varepsilon_t^8 \big]}_{=\,O(\gamma_{t-1}^4)} \Big)^{1/3} \, s_n^{-4/3} \\
& = & \sum_{t=1}^n O\big( \gamma_{t-1}^4 \, s_n^{-4/3} \big) \,=\, O\big( n^{-1/3} \big).
\end{eqnarray*}
\end{proof}

\section{A few auxiliary results}
\label{S5}

In this section we collect a few well-known facts which are used in the proofs of our main results.

{\lem
\label{couplinglemma}
Let $P$ and $P'$ be two probability distributions on $(\N_0,2^{\N_0})$ such that
$P(\{0,1,\ldots,k\})\leq P'(\{0,1,\ldots,k\})$ holds for all $k\in\N_0$. 
(If $X\sim P$ and $X'\sim P'$, then~$X$ is stochastically larger than~$X'$.)

Then there exist random variables $\widetilde{X}$ and $\widetilde{X}'$ on a common probability space
$\big(\widetilde{\Omega},\widetilde{\mathcal F},\widetilde{P}\big)$ such that
\begin{itemize}
\item[a)\quad] $\widetilde{P}^{\widetilde{X}}\,=\,P \qquad \mbox{and} \qquad \widetilde{P}^{\widetilde{X}'}\,=\,P'$,
\item[b)\quad] $\widetilde{P}\big(\widetilde{X}\neq\widetilde{X}'\big)\,=\,d_{TV}\big(P,P'\big)$,
\item[c)\quad] $\widetilde{P}\big(\widetilde{X}\geq\widetilde{X}'\big)\,=\,1$.
\end{itemize}
}

\begin{proof}
Let~$p$ and~$p'$ be the probability mass functions of~$P$ and~$P'$, respectively.
Recall that 
\begin{displaymath}
d_{TV}\big(P,Q\big) \,=\, (1/2)\, \sum_{k=0}^\infty \big| p(k) \,-\, q(k) \big|
\,=\, 1 \,-\, \sum_{k=0}^\infty p(k)\wedge q(k).
\end{displaymath}
(Note that our definition of the total variation norm differs from that in \citet{Lin92} by the factor 2.)

\citet[Theorem~5.2 in Chapter~I]{Lin92} describes a widely used approach where random variables~$\widetilde{X}$ and~$\widetilde{X}'$
are defined such that 
\begin{equation}
\label{pl51.1}
\widetilde{P}\big( \widetilde{X}=\widetilde{X}'=k \big) \,=\, p(k)\wedge p'(k) \qquad \forall k\in\N_0,
\end{equation}
while $\widetilde{X}$ and $\widetilde{X}'$ are independent on the event $\{\widetilde{X}\neq\widetilde{X}'\}$.
Then a) and b) are satisfied, however, requirement c) is not fulfilled in general.
We propose here a slight modification of this method. It is obvious that a) and b) necessarily require
a coupling satisfying (\ref{pl51.1}). In order to satisfy c) also, we use a quantile transform to generate~$\widetilde{X}$ and~$\widetilde{X}'$
on $\{\widetilde{X}\neq\widetilde{X}'\}$. In what follows we describe our approach in a more formal way.

Suppose that $\big(\widetilde{\Omega},\widetilde{\mathcal F},\widetilde{P}\big)$ admits a random
variable~$\widetilde{Z}$ which has a uniform distribution on $(0,1)$.
We define distribution functions $F$, $G$ and $G'$ on~$\N_0$ as
\begin{eqnarray*}
F(k) & = & \sum_{l=0}^k p(l)\wedge p'(l), \\
G(k) & = & \sum_{l=0}^k p(l) \,-\, p(l)\wedge p'(l), \\
G'(k) & = & \sum_{l=0}^k p'(l) \,-\, p(l)\wedge p'(l).
\end{eqnarray*}
Then
\begin{displaymath}
\lim_{k\to\infty} F(k) \,=\, 1 \,-\, d_{TV}(P, P') \,=:\, \gamma.
\end{displaymath}
If $\widetilde{Z}\leq \gamma$, then we define
\begin{displaymath}
\widetilde{X} \,=\, \widetilde{X}' \,:=\, F^{-1}(\widetilde{Z}),
\end{displaymath}
where $H^{-1}(t)=\inf\{x\colon\, H(x)\geq t\}$ denotes the generalized inverse of a generic function~$H$.
Otherwise, if $\widetilde{Z}>\gamma$, then we define
\begin{displaymath}
\widetilde{X} \,:=\, G^{-1}(\widetilde{Z}-\gamma) \qquad \mbox{ and } \qquad \widetilde{X}' \,:=\, G'^{-1}(\widetilde{Z}-\gamma).
\end{displaymath}
It follows that
\begin{displaymath}
\widetilde{P}^{\widetilde{X}}\,=\,P \qquad \mbox{and} \qquad \widetilde{P}^{\widetilde{X}'}\,=\,P'.
\end{displaymath}
Furthermore, we have that
\begin{displaymath}
\widetilde{P}\big( \widetilde{X}=\widetilde{X}' \big) \,\geq\, \widetilde{P}\big( \widetilde{Z}\leq \gamma \big) \,=\, \gamma.
\end{displaymath}
On the other hand, the well-known coupling inequality (see Section~I.2 in \citet{Lin92}) entails that
$\widetilde{P}\big(\widetilde{X}=\widetilde{X}'\big)\leq\gamma$, which proves b).
Finally, it follows from $G(k)\leq G'(k)$ $\forall k\in\N_0$ that $G^{-1}(z)\geq G'^{-1}(z)$ $\forall z$.
Therefore, c) is also satisfied.
\end{proof}

{\lem
\label{L.inequality}
Let $0\leq \lambda<\lambda'$, and let $X\sim\mbox{\rm Poi}(\lambda)$, $X'\sim\mbox{\rm Poi}(\lambda')$.
Then
\begin{displaymath}
E\sqrt{ X' } \,-\, E\sqrt{ X }
\,\leq\, 2\big(\sqrt{\lambda'} \,-\, \sqrt{\lambda}\big).
\end{displaymath}
}

\begin{proof}[Proof of Lemma~\ref{L.inequality}]
Let $(X_\nu)_{\nu>0}$ be a collection of Poisson variates with respective intensities~$\nu$.
Then the function $\nu\mapsto E\sqrt{X_\nu}$ is differentiable and it holds
\begin{subequations}
\begin{eqnarray}
\label{pl62.1a}
\frac{d}{d\nu} E\sqrt{X_\nu} 
& = & \sum_{k=1}^\infty \sqrt{k} \; \frac{d}{d\nu}\Big\{ e^{-\nu} \frac{\nu^k}{k!} \Big\} \nonumber \\
& = & \sum_{k=1}^\infty \sqrt{k} \; \big(\frac{k}{\nu}\,-\,1\big) \; e^{-\nu} \frac{\nu^k}{k!} \nonumber \\
& = & \sum_{k=1}^\infty \sqrt{k} \, e^{-\nu} \frac{\nu^{k-1}}{(k-1)!} \quad - \quad \sum_{k=1}^\infty \sqrt{k} \, e^{-\nu} \frac{\nu^k}{k!} \nonumber \\
& = & E\sqrt{X_\nu+1} \,-\, E\sqrt{X_\nu}.
\end{eqnarray}
Since the function $x\mapsto\sqrt{x+1}$ is concave we obtain by Jensen's inequality that
\begin{equation}
\label{pl62.1b}
E\sqrt{X_\nu+1} \,\leq\, \sqrt{\nu+1}.
\end{equation}
Furthermore, since
$E\sqrt{X_\nu}=\sum_{k=1}^\infty \sqrt{k}e^{-\nu}\nu^k/k! = \nu\sum_{l=0}^\infty (1/\sqrt{l+1})\,e^{-\nu}\nu^l/l!=\nu E[1/\sqrt{X_\nu+1}]$
and since the function $x\mapsto 1/\sqrt{x+1}$ is convex we obtain again by Jensen's inequality that
\begin{equation}
\label{pl62.1c}
E\sqrt{X_\nu} \,\geq\, \frac{\nu}{\sqrt{EX_\nu \,+\, 1}} \,=\, \frac{\nu}{\sqrt{\nu+1}}.
\end{equation}
\end{subequations}
It follows from (\ref{pl62.1a}) to (\ref{pl62.1c}) that
\begin{displaymath}
\frac{d}{d\nu} E\sqrt{X_\nu} \,\leq\, \sqrt{\nu+1} \,-\, \frac{\nu}{\sqrt{\nu+1}} \,=\, \frac{1}{\sqrt{\nu+1}}.
\end{displaymath}
This implies
\begin{displaymath}
E\sqrt{X'} \,-\, E\sqrt{X} \,=\, \int_\lambda^{\lambda'} \frac{d}{du} E\sqrt{X_u} \, du
\,\leq\, \int_\lambda^{\lambda'} \frac{1}{\sqrt{u}}\, du \,=\, 2\big( \sqrt{\lambda'} \,-\, \sqrt{\lambda} \big).
\end{displaymath}
\end{proof}

{\lem
\label{L_bin}
Let $X_n\sim \mbox{Bin}(n,p)$, where $n\in\N$, $p\in[0,1]$. Then
\begin{displaymath}
E\sqrt{X_{n+1}} \,-\, E\sqrt{X_n} \,\leq\, 2\,\sqrt{p}\, \big(\sqrt{n+1} \,-\, \sqrt{n}\big) \qquad \forall n\in\N.
\end{displaymath}                                                     
}

\begin{proof}
We have that
\begin{eqnarray*}
E\sqrt{X_{n+1}} \,-\, E\sqrt{X_n}
& = & p\, E\big[ \sqrt{X_{n}+1} \,-\, \sqrt{X_n} \big] \\
& = & p\, E\big[ \frac{1}{ \sqrt{X_{n}+1} \,+\, \sqrt{X_n} } \big]
\,\leq\, p\, E\big[ \frac{1}{ \sqrt{X_{n}+1} } \big].
\end{eqnarray*}
Furthermore,
\begin{eqnarray*}
E\big[ \frac{1}{ \sqrt{X_{n}+1} } \big]
& = & \sum_{k=0}^n \frac{1}{\sqrt{k+1}} \, {n \choose k} \, p^k \, (1-p)^{n-k} \\
& = & \sum_{k=0}^n \sqrt{k+1} \, \frac{n!}{(n-k)!\, (k+1)!} \, p^k \, (1-p)^{(n+1)-(k+1)} \\
& = & \frac{1}{p(n+1)} \, \sum_{l=1}^{n+1} \sqrt{l} \, {n+1 \choose l} \, p^l \, (1-p)^{n+1-l} \\
& = & \frac{1}{p(n+1)} \, E\sqrt{ X_{n+1} } \,\leq\, \frac{1}{\sqrt{p(n+1)}},
\end{eqnarray*}
which implies that
\begin{displaymath}
E\sqrt{X_{n+1}} \,-\, E\sqrt{X_n} \,\leq\, \frac{\sqrt{p}}{\sqrt{n+1}}
\,\leq\, \sqrt{p}\, \frac{2}{\sqrt{n+1}\,+\,\sqrt{n}} \,=\, 2\, \sqrt{p} \, \big( \sqrt{n+1} \,-\, \sqrt{n} \big).
\end{displaymath}
\end{proof}

{\lem
\label{L5.3}
If $p_0<1$, then
\begin{equation}
\label{1.4}
\sup_{p\leq p_0} d_{TV}\big( \mbox{Bin}(n,p), \mbox{Bin}(m,p) \big) \,=\, O\big( |\sqrt{n} \,-\, \sqrt{m}| \big).
\end{equation}
} 

\begin{proof}
Let, w.l.o.g., $n>m$. 
We denote by $f(\cdot\, ;N,p)$ the probability mass function of a binomial distribution with parameters~$N$ and~$p$,
i.e. $f(k;N,p)\,=\,{N\choose k}p^k(1-p)^{N-k}$ for $k=0,1,\ldots,N$ and $f(k;N,p)\,=\,0$ otherwise.
Then, for fixed~$p$, the mapping
\begin{displaymath}
k \,\mapsto\, \frac{f(k;m,p)}{f(k;n,p)} \,=\, \frac{m(m-1)\;\cdots\;(m-k+1)}{n(n-1)\;\cdots\;(n-k+1)}\; (1-p)^{m-n}
\end{displaymath}
is non-increasing on $\{0,1,\ldots,n\}$, where $f(0;m,p)/f(0;n,p)=(1-p)^{m-n}>1$ and \mbox{$f(n;m,p)/f(n;n,p)=0$}.
Let $k_0(p):=\min\{k\colon \; f(k;m,p)/f(k;n,p)<1\}$.
Then
\begin{eqnarray*}
d_{TV}\big( \mbox{Bin}(n,p), \mbox{Bin}(m,p) \big)
& = & \frac{1}{2} \; \sum_{k=0}^n \big| f(k;n,p) \,-\, f(k;m,p) \big| \\
& = & \sum_{k\colon\, f(k;n,p)>f(k;m,p)} f(k;n,p) \,-\, f(k;m,p) \\
& = & \sum_{k=k_0(p)}^n f(k;n,p) \,-\, f(k;m,p).
\end{eqnarray*}
To handle the supremum we next show that
\begin{equation}
\label{pl11.1}
\sup_{p\leq p_0} d_{TV}\big( \mbox{Bin}(n,p), \mbox{Bin}(m,p) \big)
\,=\, d_{TV}\big( \mbox{Bin}(n,p_0), \mbox{Bin}(m,p_0) \big).
\end{equation}
Note that, for fixed $m$, $n$ and $k$, the mapping $p\mapsto f(k;m,p)/f(k;n,p)$ is non-decreasing,
which implies that $p\mapsto k_0(p)$ is a non-decreasing and piecewise constant function. 
Denote the discontinuity points of this function by $p_1,\ldots,p_K$.
For $p\not\in\{p_1,\ldots,p_K\}$, we have that
\begin{eqnarray*}
\lefteqn{ \frac{d}{dp} \sum_{k=k_0(p)}^n f(k;n,p) } \\
& = & \sum_{k=k_0(p)}^{n-1} \Big\{ \frac{n!}{(n-k)!(k-1)!} \; p^{k-1} \; (1-p)^{n-k} 
\,-\, \frac{n!}{(n-(k+1))!k!} \; p^k \; (1-p)^{n-(k+1)} \Big\}
\;+\; {n \choose n} \, n \; p^{n-1} \\
& = & \frac{k_0(p)}{p} \; {n \choose k_0(p)} \; p^{k_0(p)} \; (1-p)^{n-k_0(p)}
\end{eqnarray*}
and, analogously, 
\begin{eqnarray*}
\lefteqn{ \frac{d}{dp} \sum_{k=k_0(p)}^n f(k;m,p) } \\
& = & \frac{k_0(p)}{p} \; {m \choose k_0(p)} \; p^{k_0(p)} \; (1-p)^{m-k_0(p)}.
\qquad \qquad \qquad \qquad \qquad \qquad \qquad \qquad \qquad \qquad
\end{eqnarray*}
which implies that
\begin{displaymath}
\frac{d}{dp} d_{TV}\big( \mbox{Bin}(n,p), \mbox{Bin}(m,p) \big)
\,=\, \frac{k_0(p)}{p} \Big( f(k_0(p);n,p) \,-\, f(k_0(p);m,p) \big)
\,>\, 0 \quad \forall p\not\in\{p_1,\ldots,p_K\}.
\end{displaymath}
(Note that $k_0(p)>0$.) Hence, (\ref{pl11.1}) holds true.

For $N\in\N$, let $S_N\sim\mbox{Bin}(N,p_0)$.
Then we obtain from the Berry-Esseen inequality that
\begin{displaymath}
\sup_x \Big| P\Big( \frac{S_N-Np_0}{\sqrt{Np_0(1-p_0)}} \leq x \Big) \,-\, \Phi(x) \Big| \,=\, O\big( \frac{1}{\sqrt{N}} \big).
\end{displaymath}
Using this approximation we obtain that
\begin{eqnarray}
\label{pl11.2}
\lefteqn{ d_{TV}\big( \mbox{Bin}(n,p_0), \mbox{Bin}(m,p_0) \big) } \nonumber \\
& = & \big| P\big( S_n \leq k_0(p_0)-1 \big) \;-\; P\big( S_m \leq k_0(p_0)-1 \big) \big| \nonumber \\
& = & \Big| \Phi\Big( \frac{(k_0(p_0)-1)-np_0}{\sqrt{np_0(1-p_0)}} \Big) 
\,-\, \Phi\Big( \frac{(k_0(p_0)-1)-mp_0}{\sqrt{mp_0(1-p_0)}} \Big) \Big| 
\,+\, O\Big( \frac{1}{\sqrt{n}} \,+\, \frac{1}{\sqrt{m}} \Big) \nonumber \\
& \leq & \frac{1}{\sqrt{2\pi}} \; \Big| \frac{(k_0(p_0)-1)-np_0}{\sqrt{np_0(1-p_0)}}
\,-\, \frac{(k_0(p_0)-1)-mp_0}{\sqrt{mp_0(1-p_0)}} \Big|
\,+\, O\Big( \frac{1}{\sqrt{n}} \,+\, \frac{1}{\sqrt{m}} \Big) \nonumber \\
& = & O\big( | \sqrt{n} \,-\, \sqrt{m} | \big).
\end{eqnarray}
The assertion of the lemma follows from (\ref{pl11.1}) and (\ref{pl11.2}).
\end{proof}

\begin{lem}\label{spectral}
Suppose that $\{v_{t,i}: t\geq 0, 1\leq i\leq d\}$ is family of non-negative real numbers
such that there exist a matrix~$C$ of size $d\times d$, with non-negative entries~$C_{ij}$ and a vector~$b$ of~$\R^d$
with non-negative entries~$b_i$ such that
\begin{displaymath}
v_{t,i} \,\leq\, \sum_{j=1}^d C_{ij}v_{t-1,j} \,+\, b_i, \qquad 1\leq i\leq d, t\geq 1.
\end{displaymath}
Then 
\begin{displaymath}
v_t \,\preceq\, \sum_{s=0}^{t-1}C^sb \,+\, C^t v_0,
\end{displaymath}
where $\preceq$ denotes the coordinatewise ordering on $\R^d$ (i.e. $v\preceq v'$ means that $v_j\leq v'_j$ for $1\leq j\leq d$). 
If $\rho(C)<1$, then for any $\kappa\in(\rho(C),1)$ there exists $\alpha<\infty$ such that
\begin{displaymath}
\|v_t\|_1 \,\leq\, \alpha\, \Big( \sum_{s=0}^\infty \kappa^s\, \|b\|_1 \,+\, \kappa^t\, \|v_0\|_1 \Big).
\end{displaymath}
\end{lem}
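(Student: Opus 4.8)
The plan is to prove the coordinatewise bound by induction on $t$ and then deduce the $1$-norm estimate from Gelfand's formula in the form~(\ref{l1-sr}).

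The key elementary observation is that a matrix with non-negative entries is monotone for the coordinatewise order: if $x\preceq y$ then $Cx\preceq Cy$, since $(Cy-Cx)_i=\sum_{j=1}^d C_{ij}(y_j-x_j)\geq 0$. The hypothesis says precisely that $v_t\preceq Cv_{t-1}+b$ for all $t\geq 1$. I would argue by induction on $t$. For $t=0$ the asserted inequality is the trivial $v_0\preceq v_0$ (the sum $\sum_{s=0}^{-1}$ being empty). For the inductive step, assuming $v_{t-1}\preceq\sum_{s=0}^{t-2}C^sb+C^{t-1}v_0$, I apply $C$ (using monotonicity), then add $b$, and reindex:
\[
v_t \preceq Cv_{t-1}+b \preceq \sum_{s=0}^{t-2}C^{s+1}b+C^tv_0+b=\sum_{s=0}^{t-1}C^sb+C^tv_0 .
\]
This gives the first claim.

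For the second claim I take the $1$-norm. Since $C$, $b$, $v_0$ have non-negative entries, all vectors occurring above are non-negative; combining the triangle inequality with submultiplicativity of the operator norm $\|\cdot\|_1$ yields
\[
\|v_t\|_1 \leq \sum_{s=0}^{t-1}\|C^s\|_1\,\|b\|_1+\|C^t\|_1\,\|v_0\|_1 .
\]
Now fix $\kappa\in(\rho(C),1)$. By~(\ref{l1-sr}) there is a finite constant $\alpha$ with $\|C^s\|_1\leq\alpha\kappa^s$ for every $s\geq 0$. Inserting this bound and enlarging the finite sum to an infinite one (all summands being non-negative) gives
\[
\|v_t\|_1 \leq \alpha\sum_{s=0}^{t-1}\kappa^s\|b\|_1+\alpha\kappa^t\|v_0\|_1 \leq \alpha\Big(\sum_{s=0}^\infty\kappa^s\|b\|_1+\kappa^t\|v_0\|_1\Big),
\]
which is the assertion.

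I do not expect any real obstacle: the argument is essentially a Gr\"onwall-type iteration. The only two points needing a word of justification are the monotonicity of multiplication by a non-negative matrix with respect to $\preceq$ (which is what makes the induction valid) and the fact that Gelfand's formula, via~(\ref{l1-sr}), furnishes a single constant $\alpha$ controlling all powers $C^s$ simultaneously rather than merely an asymptotic statement.
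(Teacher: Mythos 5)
Your proof is correct and follows essentially the same route as the paper's: the paper iterates the bound $v_t\preceq Cv_{t-1}+b$ directly (your induction is just a formalization of that iteration, with the same reliance on non-negativity of $C$ for monotonicity), and it likewise obtains the uniform constant $\alpha$ with $\|C^s\|_1\leq\alpha\kappa^s$ from Gelfand's formula before summing the geometric series. No substantive differences.
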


\begin{proof}
Using non-negativity of the coefficients and iterating the bound for the $v_{t,i}'$s we obtain
\begin{displaymath}
v_t \,\preceq\, Cv_{t-1} \,+\, b \,\preceq\, C\,(Cv_{t-2} \,+\, b) \,+\, b
\,\preceq\, \ldots \,\preceq\, C^t\, v_0 \,+\, \big( C^0+\cdots +C^{t-1} \big)\, b.
\end{displaymath}
It follows from Gelfand's formula $\lim_{s\to\infty}\|C^s\|_1^{1/s}=\rho(C)$ that $\|C^s\|_1\leq\kappa^s$ for $\kappa>\rho(C)$
and~$s$ sufficiently large, which implies that $\|C^s\|_1\leq \alpha\kappa^s$ for all $s\in\N$ and some $\alpha<\infty$.
This implies
\begin{displaymath}
\| v_t \|_1
\,\leq\, \sum_{s=0}^\infty \big\| C^t \|_1\, \| b \|_1 \,+\, \big\| C^t \big\|_1 \, \| v_0 \|_1
\,\leq\, \alpha\, \Big( \sum_{s=0}^\infty \kappa^s\, \|b\|_1 \,+\, \kappa^t\, \|v_0\|_1 \Big).
\end{displaymath}
which completes the proof.
\end{proof}
\bigskip

\begin{ack}
This work was funded by Project ``EcoDep'' PSI-AAP2020 -- 0000000013.
The authors thank a Co-Editor and two anonymous referees for their valuable comments that led to a significant improvement of the paper.
\end{ack}
\bigskip

\begin{center}
Data Availability Statement
\end{center}
\noindent
The data that support the findings of this study are available on 
\url{https://www.kaggle.com/datasets/aishu200023/stackindex}.
\bigskip

\bibliographystyle{harvard}

\end{document}